\documentclass[12pt]{amsart}

\usepackage{hyperref}
\hypersetup{nesting=true,debug=true,naturalnames=true}
\usepackage{graphicx,amssymb,upref}

\usepackage{amsmath,amsthm}
\usepackage{amsfonts}
\usepackage{latexsym}
\usepackage{amsbsy}
\usepackage{amssymb,mathscinet}

\numberwithin{equation}{section}

\usepackage{amsfonts,amssymb,amscd,amsmath,enumerate,verbatim,calc,enumerate}
\theoremstyle{plain}
\newtheorem{theorem}{Theorem}[section]
\newtheorem{corollary}[theorem]{Corollary}
\newtheorem{lemma}[theorem]{Lemma}

\newtheorem{observation}[theorem]{Observation}
\newtheorem{definition}[theorem]{Definition}

\newcommand{\raro}{\rightarrow}

\newcommand{\be}{\mathbb E}

\newcommand{\ot}{\otimes}
\newcommand {\id} {{\textrm{id}}}
\newcommand{\wt}{\widetilde}
\newcommand{\wT}{\wt{T}}

\newcommand{\wV}{\wt{V}}

\begin{document}
	
	\title[Completely contractive covariant representations]{Completely contractive covariant representations of product system over $\mathbb N^2_0$}

	\date{\today}
	\author[Saini]{Dimple Saini}
	\address{Department of Mathematics, Gautam Buddha University, Greater Noida, India}
	\email{dimple92.saini@gmail.com}

		\begin{abstract}
		A pure completely contractive covariant representations of a $C^*$-correspondence dilate to a pure isometric covariant representations due to Muhly-Solel. More specifically, we are curious about the following question for pairs: Does a pure completely contractive covariant representations of a product system dilate to a pure isometric covariant representations of a product system? The goal of this study is to find a pure completely contractive covariant representations of a product system that provide an affirmative answer for the previous question.  
	\end{abstract}

	\subjclass[2020]{47A15, 46L08, 47A80, 47B38, 47L55.}
	
	\keywords{Invariant subspaces, Isometries, Tensor product, Defect spaces, Covariant representations, Fock space}
	
	\maketitle

	\section{Introduction}
	According to Wold's fundamental theorem (cf. \cite{W1938}), every isometric operator $V$ on a Hilbert space $\mathcal{H}$ is unitarily equivalent to $M_z \oplus U,$ where $U$ is unitary on $\mathcal{H}_u:=\cap_{n\ge 0}V^n\mathcal{H}$, and $M_z$ is the shift on $\mathcal{D}$-valued Hardy space $H^2_{\mathcal{D}}(\mathbb{D})$ over unit disc $\mathbb{D},$ for some Hilbert space $\mathcal{D}.$ A contraction $T$ on a Hilbert space $\mathcal{H}$ is said to be pure if $T^{*m}\to 0$ in strong operator topology (SOT), (i.e., $\|T^{*m}h\| \rightarrow 0$ as $m \rightarrow \infty$ for all $h\in \mathcal{H}$). Sz.-Nagy and Foias theorem \cite{NFBK10} proved that every pure contraction $T$ on a Hilbert space $\mathcal{H}$ is unitarily equivalent to an operator of the form $P_{\mathcal{Q}}M_{z}|_{\mathcal{Q}},$ where $\mathcal{Q}$ is a closed $M_z^*$-invariant subspace of $H^2_{\mathcal{D}}(\mathbb{D})$ and $P_{\mathcal{Q}}$ is an orthogonal projection of $H^2_{\mathcal{D}}(\mathbb{D})$ onto $\mathcal{Q}.$ 
	
	Berger, Coburn, and Lebow \cite{BCL78} discussed the idea of a pure pair of commuting isometries, which is a significant advancement in the study of representation and Fredholm theory for $C^*$-algebras produced by commuting isometries. Berger, Coburn, and Lebow \cite{BCL78} studied the following result:
		\begin{theorem}
		 Assume $(V_1, V_2)$ to be a pair of commuting isometries on $\mathcal{H},$ and $V = V_1 V_2$ to be pure. Then there exist a unitary $U$ on a Hilbert space $\mathcal{E}$ and an orthogonal projection $P$ on $\mathcal{E}$ such that
		\[
		{\Psi}(z)=U(P^{\perp}+zP)
		 \quad and \quad {\Phi}(z)=(P+zP^{\perp})U^* 
		\]
		are  inner functions   in $H^\infty_{B(\mathcal{E})}(\mathbb{D})$  and  the triple $(M_{\Phi}, M_{\Psi}, M_z)$ is unitarily equivalent to $(V_1, V_2, V).$
	\end{theorem}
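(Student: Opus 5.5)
The plan is to begin with Wold's theorem applied to the pure isometry $V=V_1V_2$. Set $\mathcal{E}:=\mathcal{W}(V)=\mathcal{H}\ominus V\mathcal{H}=\ker V^*$. Because $V$ is pure, the map
\[
\Pi:\mathcal{H}\to H^2_{\mathcal{E}}(\mathbb{D}),\qquad \Pi h=\sum_{n\ge 0} z^n P_{\mathcal{E}}V^{*n}h,
\]
is a unitary with $\Pi V=M_z\Pi$. Each $V_i$ commutes with $V$, so $\Pi V_i\Pi^*$ commutes with $M_z$. It is therefore a multiplication operator $M_{\Theta_i}$ for some $\Theta_i\in H^\infty_{B(\mathcal{E})}(\mathbb{D})$. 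Since $\Pi V_i\Pi^*$ is an isometry, $\Theta_i$ is inner. It then remains to identify $\Theta_1=\Phi$ and $\Theta_2=\Psi$ in the stated linear form.

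The key step is to decompose $\mathcal{E}$ using the two factorizations $V=V_1V_2=V_2V_1$. Since $V_1$ is an isometry, $\mathcal{H}\ominus V_1V_2\mathcal{H}=(\mathcal{H}\ominus V_1\mathcal{H})\oplus V_1(\mathcal{H}\ominus V_2\mathcal{H})$. This gives
\[
\mathcal{E}=\mathcal{W}(V_1)\oplus V_1\mathcal{W}(V_2)=\mathcal{W}(V_2)\oplus V_2\mathcal{W}(V_1).
\]
Define $U:\mathcal{E}\to\mathcal{E}$ by $U(w_1+V_1w_2)=V_2w_1+w_2$ for $w_1\in\mathcal{W}(V_1)$ and $w_2\in\mathcal{W}(V_2)$. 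It maps the first decomposition onto the second, so it is unitary. Let $P$ be the orthogonal projection onto $V_1\mathcal{W}(V_2)$, so that $P^\perp$ is the projection onto $\mathcal{W}(V_1)$.

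Next, compute $V_2$ on the generating wandering space $\mathcal{E}$. For $e=w_1+V_1w_2$ we have $V_2e=V_2w_1+Vw_2$. Here $V_2w_1\in\mathcal{E}$. The vector $Vw_2$ corresponds to $z\cdot w_2$, and $w_2=U(V_1w_2)=UPe$. Likewise $V_2w_1=UP^\perp e$. Hence $\Pi V_2 e=(UP^\perp+zUP)e=\Psi(z)e$, and by $M_z$-intertwining this extends to $\Pi V_2\Pi^*=M_\Psi$. For $V_1$, the analogous computation uses the second decomposition $e=w_2'+V_2w_1'$. This gives $V_1e=V_1w_2'+Vw_1'$, where $V_1w_2'=PU^*e$ and $w_1'=P^\perp U^*e$. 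Hence $\Pi V_1\Pi^*=M_{(P+zP^\perp)U^*}=M_\Phi$. As a consistency check, $\Phi\Psi=\Psi\Phi=zI$ holds, since $P^\perp P=0$.

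The main obstacle is bookkeeping rather than analysis. One must check carefully that $U$ is well defined, isometric and onto. This comes down to $V_2\mathcal{W}(V_1)\perp\mathcal{W}(V_2)$ and the orthogonality of the two decompositions, both of which follow from $V_1$ and $V_2$ being isometries. One must also match each summand to the correct projection in $\Psi$ and $\Phi$.
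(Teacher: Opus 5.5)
Your proof is correct. The Wold model for $V$, the two decompositions $\mathcal{E}=\mathcal{W}(V_1)\oplus V_1\mathcal{W}(V_2)=\mathcal{W}(V_2)\oplus V_2\mathcal{W}(V_1)$, and the unitary $U$ acting as $V_2$ on $\mathcal{W}(V_1)$ and as $V_1^*$ on $V_1\mathcal{W}(V_2)$ give exactly $\Pi V_2\Pi^*=M_\Psi$ and $\Pi V_1\Pi^*=M_\Phi$.

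The paper does not prove this theorem; it only cites it from Berger--Coburn--Lebow. Your construction is, however, the scalar case ($E_1=E_2=\mathbb{C}$) of the BCL representation the paper recalls in Section 2, because $U(P^\perp+zP)=(P_{\mathcal{W}(V_2)}^\perp+zP_{\mathcal{W}(V_2)})U$.
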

	Das, Sarkar and Sarkar \cite{DSS17} obtained a similar result for pure pairs of commuting contractions in the following way:
	\begin{theorem}
		Let $T$ be a pure contraction and $(T_1,T_2)$ be a pair of commuting
		contractions on a Hilbert space $\mathcal{H}.$ Let $\mathcal{Q}$ be the Sz.-Nagy and Foias representation of $T.$ Then the following are equivalent:
		\begin{enumerate}
			\item $T=T_1T_2;$
			\item There exist a triple $(\mathcal{E}, U, P )$ and a joint $(M_z^*,M_{\Phi}^*,M_{\Psi}^*)$-invariant subspace $\mathcal{Q}'$ of $H_{\mathcal{E}}^2(\mathbb{D})$
			such that $$T_1\cong P_{\mathcal{Q}'}M_{\Phi}|_{\mathcal{Q}'},T_2\cong P_{\mathcal{Q}'}M_{\Psi}|_{\mathcal{Q}'},T\cong P_{\mathcal{Q}'}M_{z}|_{\mathcal{Q}'}\quad and \quad M_{\Phi}M_{\Psi}=M_{\Psi}M_{\Phi}=M_{z}.$$
			\item There exist $B(\mathcal{D})$-valued polynomials $\phi$ and $\psi$ of degree $\le 1$ such that $\mathcal{Q}$ is a joint
			$(M_{\phi}^*,M_{\psi}^*)$-invariant subspace, $$P_{\mathcal{Q}}M_{\phi\psi}|_{\mathcal{Q}}=P_{\mathcal{Q}}M_{\psi\phi}|_{\mathcal{Q}}=P_{\mathcal{Q}}M_{z}|_{\mathcal{Q}} \quad and \quad (T_1,T_2)\cong (P_{\mathcal{Q}}M_{\phi}|_{\mathcal{Q}},P_{\mathcal{Q}}M_{\psi}|_{\mathcal{Q}}).$$
		\end{enumerate}
		\end{theorem}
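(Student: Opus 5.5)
The plan is to prove the cycle (1)$\Rightarrow$(2)$\Rightarrow$(3)$\Rightarrow$(1), with the BCL theorem above as the main tool. Throughout, write $\mathcal{D}=\overline{\operatorname{ran}}\,(I-TT^*)^{1/2}$. The Sz.-Nagy--Foias model realizes $\mathcal{H}$ as the co-invariant subspace $\mathcal{Q}=\operatorname{ran}\Pi$, where
\[
\Pi h=\sum_{n\ge0} z^n (I-TT^*)^{1/2}T^{*n}h .
\]
Since $T$ is pure, $\Pi\colon\mathcal{H}\to H^2_{\mathcal{D}}(\mathbb{D})$ is an isometry with $\Pi T^*=M_z^*\Pi$.

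\emph{(1)$\Rightarrow$(3).} Assume $T=T_1T_2$. The key identity is
\[
I-TT^* = (I-T_1T_1^*) + T_1(I-T_2T_2^*)T_1^* = (I-T_2T_2^*) + T_2(I-T_1T_1^*)T_2^*.
\]
Set $\mathcal{F}=\overline{\operatorname{ran}}D_{T_1^*}\oplus\overline{\operatorname{ran}}D_{T_2^*}$, enlarged by an auxiliary infinite-dimensional summand if needed so that the two isometric embeddings below can be completed to a unitary. Define
\[
D_Th\mapsto \big(D_{T_1^*}h,\;D_{T_2^*}T_1^*h\big), \qquad D_Th\mapsto \big(D_{T_1^*}T_2^*h,\;D_{T_2^*}h\big).
\]
By the identity both maps are isometries from $\mathcal{D}$, so there is a unitary $W$ on $\mathcal{F}$ intertwining them. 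Decompose $W$ into a $2\times2$ block matrix relative to the splitting of $\mathcal{F}$, and compress along the lines of the BCL construction. This produces linear pencils $\phi(z)=A_1+zB_1$ and $\psi(z)=A_2+zB_2$ with coefficients in $B(\mathcal{D})$ (after conjugating with the embedding) satisfying
\[
\Pi T_1^*=M_\phi^*\Pi,\qquad \Pi T_2^*=M_\psi^*\Pi .
\]
To verify these intertwinings, compare coefficients of $z^n$. Both sides reduce to identities of the form $D_T T_1^*T^{*n}=A_1^*D_TT^{*n}+B_1^*D_TT^{*(n+1)}$, and each of these follows from the defining relations of $W$. Granting the intertwinings, $\mathcal{Q}$ is jointly $(M_\phi^*,M_\psi^*)$-invariant and $T_i\cong P_{\mathcal{Q}}M|_{\mathcal{Q}}$. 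Commutativity and $T=T_1T_2$ then give $P_{\mathcal{Q}}M_{\phi\psi}|_{\mathcal{Q}}=P_{\mathcal{Q}}M_{\psi\phi}|_{\mathcal{Q}}=P_{\mathcal{Q}}M_z|_{\mathcal{Q}}$, because the compressions to a co-invariant subspace are multiplicative.

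\emph{(3)$\Rightarrow$(2) and (2)$\Rightarrow$(1).} For (3)$\Rightarrow$(2), take $\mathcal{E}=\mathcal{F}$, $U=W$, and $P$ the projection onto the first summand. Then $\Phi(z)=(P+zP^\perp)U^*$ and $\Psi(z)=U(P^\perp+zP)$ are inner, commute, and have product $z$. Embed $H^2_{\mathcal{D}}$ into $H^2_{\mathcal{E}}$ through the isometric inclusion used above, and let $\mathcal{Q}'$ be the image of $\mathcal{Q}$; this image is jointly co-invariant. For (2)$\Rightarrow$(1), note that $\mathcal{Q}'$ is co-invariant, so $P_{\mathcal{Q}'}M_\Phi M_\Psi|_{\mathcal{Q}'}=(P_{\mathcal{Q}'}M_\Phi|_{\mathcal{Q}'})(P_{\mathcal{Q}'}M_\Psi|_{\mathcal{Q}'})$. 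This gives $T=T_1T_2$, and commutativity follows in the same way from $M_\Phi M_\Psi=M_\Psi M_\Phi$.

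The main obstacle is in (1)$\Rightarrow$(3): choosing the unitary $W$ and verifying that the resulting coefficients make $\phi,\psi$ satisfy both intertwinings exactly. The danger is that the naive dimension count fails, so the unitary extension may need the enlargement of $\mathcal{F}$. One must then check that the compression back to $\mathcal{D}$ still yields operators on $\mathcal{D}$ of degree at most one. I would first try following the construction in Das--Sarkar--Sarkar closely.
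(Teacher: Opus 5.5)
The paper does not prove this statement; it quotes it from Das--Sarkar--Sarkar. Its Theorems~\ref{identity} and \ref{JJ3}, together with the compression $\widetilde{\phi}_i=(I\ot V^*)\widetilde{\Phi}_i(I\ot V)$ in Section~4, transplant exactly your template: defect identity, isometry between the two defect decompositions, unitary extension with an auxiliary summand, BCL pencils, compression by $V$. So your strategy is right, but as written the cycle does not close.

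\textbf{The cycle has a hole.} Your step ``(3)$\Rightarrow$(2)'' never uses hypothesis (3). The space $\mathcal{E}=\mathcal{F}$, the unitary $U=W$ and the embedding $\mathcal{D}\to\mathcal{F}$ were all built from $T=T_1T_2$. What you actually prove is (1)$\Rightarrow$(3), (1)$\Rightarrow$(2) and (2)$\Rightarrow$(1); nothing starts from (3). The fix is to prove (3)$\Rightarrow$(1) by the same multiplicativity you use for (2)$\Rightarrow$(1). Since $\mathcal{Q}$ is $M_\phi^*$- and $M_\psi^*$-invariant, $P_{\mathcal{Q}}M_\phi M_\psi|_{\mathcal{Q}}=(P_{\mathcal{Q}}M_\phi|_{\mathcal{Q}})(P_{\mathcal{Q}}M_\psi|_{\mathcal{Q}})$, hence $\Pi T_1T_2\Pi^*=P_{\mathcal{Q}}M_z|_{\mathcal{Q}}=\Pi T\Pi^*$.

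Say explicitly that this step, and your (2)$\Rightarrow$(1), require all the $\cong$'s in (2), respectively (3), to be implemented by a single unitary; in (3) that unitary is the Sz.-Nagy--Foias map $\Pi$. With independent unitaries the theorem is false. Take $T_1=I$, $T_2=S$ pure, $T=USU^*\neq S$ with $U$ unitary, $\phi=1$, $\psi=z$: then (3) holds but $T\neq T_1T_2$.

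\textbf{Your explicit triple is wrong for the natural embedding.} Let $VD_{T^*}h=(D_{T_1^*}h,D_{T_2^*}T_1^*h)$, where your ``$D_T$'' means $D_{T^*}=(I-TT^*)^{1/2}$. Let $\Pi_V=(I\ot V)\Pi$, $U=W$, and $P$ the projection onto the first summand. Already for $T_1=0$ (so $T=0$ and $\Pi_Vh=(h,0)$), the $z^0$-coefficients give $W(h,0)=(T_2^*h,D_{T_2^*}h)\neq 0=\Pi_VT_1^*h$ for every $h\neq0$. The correct triple is $U=W^*$ with $P$ the projection onto the $\mathcal{D}_{T_2^*}$-summand. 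The auxiliary summand may sit on either side, since every vector involved has zero component there. Indeed, with $k=T^{*n}h$ and $T^*=T_1^*T_2^*=T_2^*T_1^*$,
\[
PVD_{T^*}k+P^\perp VD_{T^*}T^*k=(D_{T_1^*}T_2^*T_1^*k,\,D_{T_2^*}T_1^*k)=WVD_{T^*}T_1^*k,
\]
which is the $z^n$-coefficient identity for $\Pi_VT_1^*=M_\Phi^*\Pi_V$. Likewise
\[
P^\perp WVD_{T^*}k+PWVD_{T^*}T^*k=(D_{T_1^*}T_2^*k,\,D_{T_2^*}T_1^*T_2^*k)=VD_{T^*}T_2^*k,
\]
which gives $\Pi_VT_2^*=M_\Psi^*\Pi_V$.

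Your choice $U=W$ with $P$ onto the first summand works only with the other embedding $D_{T^*}h\mapsto(D_{T_1^*}T_2^*h,D_{T_2^*}h)$. Even then, $\Phi$ models $T_2$ and $\Psi$ models $T_1$, so the roles are swapped.

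With the corrected triple, (1)$\Rightarrow$(2) holds with $\mathcal{Q}'=\operatorname{ran}\Pi_V$. Also (1)$\Rightarrow$(3) holds with $\phi=V^*\Phi V$ and $\psi=V^*\Psi V$, because $(I\ot V^*)\Pi_V=\Pi$ gives $\Pi T_1^*=(I\ot V^*)M_\Phi^*(I\ot V)\Pi=M_\phi^*\Pi$, and similarly for $T_2$.
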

	
	Covariant representations of product system are one of the most advanced theories in operator theory and operator algebras that offer a cohesive method for examining both commuting and non-commuting tuples of operators on Hilbert spaces.
	
	For $C^*$-correspondences, Muhly and Solel \cite{MS99} proved Wold decomposition for the isometric covariant representations. Muhly and Solel \cite{MS98} introduced isometric dilation for completely contractive covariant representations of $C^*$-correspondences. The goal of this study is to find an isometric dilation of pure completely contractive covariant representations of product system. Our work is based on an explicit version of the isometric dilation for pure pairs of commuting contractions obtained in \cite{DSS17, S21}.

	\subsection{Preliminaries and Notations}
	First, we'll review some basic concepts from \cite{L95, MS98, MS99}.	
	Assume $\mathcal{B}$ to be a $C^*$-algebra and $E$ to be a {\it $C^*$-correspondence} over $\mathcal{B}$ with the left module map $\phi:\mathcal B\to \mathcal L(E)$ defined by $ a\xi:=\phi(a)\xi, a\in\mathcal{B}$ and $\xi\in E,$ where $\mathcal L(E)$ denotes the collection of all adjointable operators on $E.$ Throughout this article, we will use the following notations: $E$ denotes a $C^*$-correspondence over $\mathcal{B}$, $\mathcal{H}$ is a Hilbert space, $B(\mathcal{H})$ represents the collection of all bounded linear operators on $\mathcal{H},$  each $*$-homomorphism is nondegenerate and $\mathbb N_0 =\mathbb N\cup\{0\}.$
	
	\begin{definition}
		Assume $T:E\to B(\mathcal H)$ to be a linear map and  $\sigma:\mathcal{B}\to B(\mathcal H)$ to be a representation. We say that the pair $(\sigma,T)$ is a {\rm completely contractive covariant representation} (or simply write, {\rm c.c.c. representation}) (cf. \cite{MS98}) of $E$ on $\mathcal H$  if $T$ is completely contractive and
		\[
		T(ax b)=\sigma(a)T(x)\sigma(b) \quad \quad (
		a,b\in\mathcal{B}~ \mbox{and}~ x\in E).
		\] 
	
	\end{definition}
	
	 Due to Muhly and Solel \cite{MS98}, the following result is helpful to classify the completely contractive covariant representations of a $C^*$-correspondence.
	
	\begin{lemma} \label{MSC}
		The function $(\sigma, T)\mapsto \widetilde{T}$ gives a bijection between the set of all c.c.c. representations $(\sigma, T)$
		and the set of all contractive linear maps $\widetilde{T}:E\otimes_{\sigma} \mathcal H\to \mathcal
		H$ defined by
		\[
		\widetilde{T}(x\otimes h)=T(x)h \quad \quad (
		h\in\mathcal H,x\in E),
		\] and satisfies $\widetilde{T}(\phi(a)\otimes I_{\mathcal
			H})=\sigma(a)\widetilde{T}$ for all $a\in\mathcal{B}$. Further, $\wT$ is {\rm isometry} (resp. {\rm co-isometry, partial isometry }) if and only if $(\sigma, T)$ is isometric (resp. {\rm co-isometric, partial isometric}).
	\end{lemma}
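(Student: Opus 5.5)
The plan has two parts. First, for a c.c.c. representation $(\sigma,T)$, we build $\wT$ and check that it is well defined, contractive and intertwining. Second, we show that every contraction with the intertwining property comes from a unique c.c.c. representation. The isometric, co-isometric and partial isometric statements then follow by comparing inner products.

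Let $(\sigma,T)$ be c.c.c. We define $\wT$ on elementary tensors by $\wT(x\otimes h)=T(x)h$ and extend linearly to the algebraic tensor product $E\odot\mathcal H$. The central estimate is the following: for $x_1,\dots,x_n\in E$ and $h_1,\dots,h_n\in\mathcal H$,
\[
\Big\|\sum_i T(x_i)h_i\Big\|^2\le \sum_{i,j}\langle h_i,\sigma(\langle x_i,x_j\rangle)h_j\rangle=\Big\|\sum_i x_i\otimes h_i\Big\|^2 .
\]
To prove it, consider the row $[x_1,\dots,x_n]\in M_{1,n}(E)$. Its norm is $\|(\langle x_i,x_j\rangle)_{i,j}\|^{1/2}$, the norm of the Gram matrix in $M_n(\mathcal B)$.

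We would first try the standard Muhly--Solel reduction. The Gram matrix $G=(\langle x_i,x_j\rangle)$ is positive in $M_n(\mathcal B)$. After adjoining a unit if necessary, set $G_\epsilon=G+\epsilon I$, which is invertible. Put $y=[x_1,\dots,x_n]\,G_\epsilon^{-1/2}$. Then $\langle y,y\rangle=G_\epsilon^{-1/2}GG_\epsilon^{-1/2}\le I$, so $\|y\|\le 1$. Since $T$ is completely contractive, $\|T^{(1,n)}(y)\|\le 1$. Bimodularity of $T$ gives
\[
T^{(1,n)}([x_i])=T^{(1,n)}(y)\,\sigma^{(n)}(G_\epsilon^{1/2}).
\]
Applying this to the column $(h_j)$ and letting $\epsilon\to 0$ yields the estimate. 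This inequality is the main obstacle, because it is the only place where complete contractivity is really needed. It shows that $\wT$ is well defined: it vanishes on the null vectors of the semi-inner product. It also shows that $\wT$ is contractive, so it extends to the completion $E\otimes_\sigma\mathcal H$. The intertwining relation is immediate from covariance:
\[
\wT(ax\otimes h)=T(ax)h=\sigma(a)T(x)h .
\]

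For the converse, let $\wT$ be a contraction with $\wT(\phi(a)\otimes I)=\sigma(a)\wT$. Define $T(x)h=\wT(x\otimes h)$, so that $T(x)=\wT L_x$, where $L_x h=x\otimes h$ and $\|L_x\|\le\|x\|$. The identity $xb\otimes h=x\otimes\sigma(b)h$ gives $T(xb)=T(x)\sigma(b)$, and the intertwining relation gives $T(ax)=\sigma(a)T(x)$. For complete contractivity, for $X=(x_{ij})\in M_n(E)$ we factor
\[
T^{(n)}(X)=(I_n\otimes\wT)\,L^{(n)}_X,
\]
where $L^{(n)}_X:\mathcal H^n\to(E\otimes_\sigma\mathcal H)^n$. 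One checks that $\|L^{(n)}_X\|^2=\|\sigma^{(n)}(\langle X,X\rangle)\|\le\|X\|^2$, which gives $\|T^{(n)}(X)\|\le\|X\|$. The two constructions are clearly inverse to each other, so the correspondence is a bijection.

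Finally, the last sentence of the statement is essentially definitional once we use the identity
\[
\langle\wT(x\otimes h),\wT(y\otimes k)\rangle=\langle T(x)h,T(y)k\rangle .
\]
By this identity, $\wT$ is an isometry iff $T(x)^*T(y)=\sigma(\langle x,y\rangle)$ for all $x,y$, which is the definition of an isometric representation. Co-isometric and partial isometric representations are \emph{defined} through $\wT\wT^*=I$ and through $\wT$ being a partial isometry, respectively, so in those cases there is nothing further to prove.
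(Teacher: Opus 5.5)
Your proposal is correct, and it is essentially the standard Muhly--Solel proof. The paper gives no argument of its own for this lemma and simply quotes it from \cite{MS98}. That proof uses the same ingredients as yours: the row/Gram-matrix estimate with the $G_\epsilon^{-1/2}$ normalization for the forward direction, the factorization $T^{(n)}(X)=(I_n\otimes\wT)L_X^{(n)}$ for the converse, and the inner-product identity for the isometric case. The co-isometric and partial-isometric clauses are indeed definitional in the standard usage, which the paper implicitly adopts.
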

	Assume $E$ to be a $C^*$-correspondence over $\mathcal{B}.$ Then,
	$E^{\otimes m}: =E\otimes_{\phi} \cdots \otimes_{\phi}E$ ($m\in\mathbb{N}_0$ times) is a $C^*$-correspondence over $\mathcal{B}$ with $E^{\ot 0}=\mathcal{B}$ and left action of $\mathcal{B}$ on $E^{\otimes m}$  is defined by  $$\phi^m(a)(x_1 \otimes \cdots \otimes x_m):=ax_1\otimes \cdots \otimes x_m \quad \quad \quad (x_i \in E, a\in\mathcal{B}).$$ 
	For  $m\in \mathbb{N},$ define $\wT_m : E^{\ot m}\ot \mathcal{H} \to \mathcal{H}$ by 
	$$\wT_m (x_1 \ot \cdots \ot x_m \ot h) = T (x_1) \cdots T(x_m) h \quad \quad \quad (x_i \in E, h \in \mathcal H).$$
	The Fock module, $\mathcal{F}(E):= \bigoplus_{m \geq 0}E^{\otimes m},$ is a $C^*$ correspondence over $\mathcal{B},$ where the left module action of $\mathcal{B}$ on $\mathcal{F}(E)$ is given by $$\phi_{\infty}(a)(\oplus_{m \geq 0}x_m):=\oplus_{m \geq 0}a\cdot x_m , \:\: x_m \in E^{\otimes m}.$$
	Let $\xi \in E,$ define the {\it creation operator} $T_{\xi}$ on $\mathcal{F}(E)$  by $$T_{\xi}(x):=\xi \otimes x, \:\: x \in E^{\otimes m}.$$
	Let $\pi$ be a representation of $\mathcal{B}$ on $\mathcal{H}$. An isometric covariant representation $(\rho, S)$ of $E$ on a Hilbert space $\mathcal{F}(E)\otimes_{\pi}\mathcal{H}$ given by
	\begin{align*}
	\rho(a):=\phi_{\infty}(a) \otimes I_{\mathcal{H}} \:\:, a \in \mathcal{B} \quad and \quad
	S(\xi):=T_{\xi}\otimes I_{\mathcal{H}} ,\:\: \xi \in E.
	\end{align*}
	We say that $(\rho,S)$ is an induced representation of $E$ (\cite{mR74}) induced  by $\pi$. Note that any isometric representation which is isomorphic to an induced representation is also induced.
	
	\begin{definition}	
 A closed subspace $\mathcal{W}\subseteq\mathcal{H}$ is said to be {\rm invariant} under the covariant representation $(\sigma, T)$ $(resp. (\sigma,T)$-{\rm reducing}) if $\mathcal{W}$ is  $\sigma{(\mathcal{B})}$-invariant and (resp. both $\mathcal{W}^{\perp}, \mathcal{W}$) is invariant under each operator  $T(x)$ for all $x \in E.$ The restriction $(\sigma , T)|_{\mathcal{W}}$ gives a new representation of $E$ on $\mathcal{W}.$ We say that $\mathcal{W}$ is {\rm co-invariant} if $\wT^*{(\mathcal{W})}\subseteq E\ot \mathcal{W}.$
	\end{definition}
	
Assume $(\sigma, T)$ to be a c.c.c. representation of $E$ on  $\mathcal{H},$ that is, $I_{\mathcal{H}}-\wT\wT^*\ge 0.$ We say that $(\sigma, T)$ is {\it pure} if $\wT_n^*\to 0$ in SOT. Now we define a positive operator $\Delta(T):=(I-\widetilde{T}{\widetilde{T}}^*)^{\frac{1}{2}}$ and the corresponding defect space $\mathcal{D}_{T}:=\overline{Im\Delta(T)}$. Since $$\wT\wT^*\sigma(a)=\wT(\phi(a)\ot I_{\mathcal{H}})\wT^*=\sigma(a)\wT\wT^*$$ for $a\in \mathcal{B},$ we get  $\Delta(T) \in \sigma(\mathcal{B})',$ and hence $\mathcal{D}_{T}$ is $\sigma(\mathcal{B})$-reducing. Define $(\sigma_0, T_0):=(\sigma ,T)|_{\mathcal{D}_{T}},$ for simplicity we write $(\sigma, T)$ instead of $(\sigma_0, T_0)$. The Poisson kernel of $(\sigma, T)$ \cite{MS98} is the operator $\Pi: \mathcal{H} \longrightarrow \mathcal{F}(E)\otimes_{\sigma}\mathcal{D}_{T}$ given by $$\Pi u:=\sum_{m\geq0}(I_{E^{\otimes m}}\otimes\Delta(T))\widetilde{T}_m^* u,\:\:\: u \in \mathcal{H}.$$

Now, we present an isometric dilation of c.c.c. representations of a $C^*$-correspondence due to Muhly and Solel \cite{MS98}. 	
\begin{theorem}\label{dilation}
		Assume $(\sigma, T)$ to be a c.c.c. representation of $E$ on $\mathcal{H},$ then the poisson kernel $\Pi$ is a contraction and
		\begin{align} \label{sid}
		  (I_{E} \ot \Pi)\widetilde{T}^{*}=\widetilde{S}^{*}\Pi  \:\: \mbox{and} \:\:  	\Pi \sigma(a)=\rho(a)\Pi, \:\:\:a \in \mathcal{B}. 
		\end{align}
		Moreover, $\Pi$ is an isometry if and only if $(\sigma, T) $ is pure. The induced representation $(\rho,S)$ of $E$ induced by $\sigma|_{\mathcal{D}_{T}}$ is called {\rm isometric dilation} of $(\sigma, T).$
\end{theorem}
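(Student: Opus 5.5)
We prove the three assertions of Theorem~\ref{dilation} in turn, using the defect identity
\[
\Delta(T)^2=I-\wT\wT^*
\]
and its iterates through the maps $\wT_m$. The first step is the recursion $\wT_{m+1}=\wT(I_E\ot \wT_m)$, with the tensor product suitably balanced over $\sigma$. It gives
\[
\wT_{m+1}\wT_{m+1}^*=\wT(I_E\ot \wT_m\wT_m^*)\wT^*
\]
and, equivalently,
\[
\wT_m(I_{E^{\ot m}}\ot \Delta(T)^2)\wT_m^*=\wT_m\wT_m^*-\wT_{m+1}\wT_{m+1}^*.
\]
Here one needs that $\Delta(T)\in\sigma(\mathcal{B})'$, so that $I_{E^{\ot m}}\ot\Delta(T)$ is well defined on $E^{\ot m}\ot_\sigma\mathcal{H}$; this was observed just before the definition of the Poisson kernel.

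Summing the identity for $m=0,\dots,N$ telescopes:
\[
\sum_{m=0}^N\|(I_{E^{\ot m}}\ot\Delta(T))\wT_m^*u\|^2=\|u\|^2-\|\wT_{N+1}^*u\|^2\le\|u\|^2.
\]
Since the summands lie in the mutually orthogonal pieces $E^{\ot m}\ot\mathcal{D}_T$ of $\mathcal{F}(E)\ot_\sigma\mathcal{D}_T$, the series defining $\Pi u$ converges, and $\|\Pi u\|^2=\|u\|^2-\lim_N\|\wT_{N+1}^*u\|^2$. This gives contractivity at once. It also gives the characterization: $\Pi$ is an isometry if and only if $\|\wT_n^*u\|\to0$ for every $u$, which is exactly purity of $(\sigma,T)$. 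The existence of the limit is automatic, because the sequence $\|\wT_n^*u\|$ is decreasing, each $\wT$ being contractive.

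For the intertwining relations, take first the module map. We have $\wT_m^*\sigma(a)=(\phi^m(a)\ot I)\wT_m^*$, by iterating the covariance relation of Lemma~\ref{MSC} and taking adjoints. Since $\Delta(T)$ commutes with $\sigma(\mathcal{B})$, this yields $\Pi\sigma(a)=\rho(a)\Pi$.

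For $(I_E\ot\Pi)\wT^*=\wt{S}^*\Pi$, first identify $\wt S$ on $E\ot(\mathcal{F}(E)\ot\mathcal{D}_T)$. It is the natural map $\xi\ot(\eta\ot d)\mapsto(\xi\ot\eta)\ot d$, which is an isometry onto $\bigoplus_{m\ge1}E^{\ot m}\ot\mathcal{D}_T$. Its adjoint therefore kills the $m=0$ summand and maps $E^{\ot (m+1)}\ot\mathcal{D}_T$ onto $E\ot(E^{\ot m}\ot\mathcal{D}_T)$ by reassociation. Next, $\wT_{m+1}^*=(I_E\ot\wT_m^*)\wT^*$ under that identification. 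Comparing the two sides term by term then gives
\[
\wt S^*\Pi u=\sum_{m\ge0}(I_E\ot I_{E^{\ot m}}\ot\Delta(T))(I_E\ot\wT_m^*)\wT^*u=(I_E\ot\Pi)\wT^*u.
\]
For the interchange of the sum with $I_E\ot(\cdot)$, one approximates $\wT^*u$ by finite sums of elementary tensors and uses the boundedness of $\Pi$ already established.

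The main obstacle is bookkeeping rather than estimates: checking that the identifications $E\ot_\sigma(E^{\ot m}\ot_\sigma\mathcal{H})\cong E^{\ot(m+1)}\ot_\sigma\mathcal{H}$ are unitary and compatible with the maps $\wT_m$ and $I\ot\Delta(T)$. Once the recursion $\wT_{m+1}=\wT(I_E\ot\wT_m)$ is in place under these identifications, the telescoping and intertwining computations above are routine.
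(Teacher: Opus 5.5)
The paper gives no proof of this theorem; it quotes it from Muhly and Solel \cite{MS98}. Your argument is correct and is the standard Poisson-kernel proof: you telescope $\Delta(T)^2=I-\widetilde{T}\widetilde{T}^*$ through the maps $\widetilde{T}_m$, use orthogonality of the summands $E^{\otimes m}\otimes\mathcal{D}_T$ to get $\|\Pi u\|^2=\|u\|^2-\lim_n\|\widetilde{T}_n^*u\|^2$, and identify $\widetilde{S}^*$ as the backward shift. Your interchange of $I_E\otimes(\cdot)$ with the series is also justified.

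One cosmetic point. The identity $\widetilde{T}_m(I_{E^{\otimes m}}\otimes\Delta(T)^2)\widetilde{T}_m^*=\widetilde{T}_m\widetilde{T}_m^*-\widetilde{T}_{m+1}\widetilde{T}_{m+1}^*$ is not literally \emph{equivalent} to the one you display. It follows most directly from the other-sided recursion $\widetilde{T}_{m+1}=\widetilde{T}_m(I_{E^{\otimes m}}\otimes\widetilde{T})$. Both recursions are just associativity of the balanced tensor product, so nothing is lost.
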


	\section{Completely contractive covariant representations over product system}

	 A \emph{product system} $\be$ over $\mathbb{N}^2_0$ consists of a collection of $C^*$-correspondences $\{E_1,E_2\}$ (see \cite{F02, SZ08, S06, S08}) along with unitary isomorphisms $t_{j,i}: E_j \ot E_i \to E_i \ot E_j$ ($j>i$). Using this, we identify each correspondence $\be (m)$ for $m=(m_1,m_2) \in \mathbb N^2_0$ with $E_1^{\ot^{ m_1}}\ot E_2^{\ot^{m_2}}.$ We also set $t_{j,j} = \id_{E_j \ot E_j}$ and $t_{j,i} = t_{i,j}^{-1}$ when \(j < i\).
	
\begin{definition}
	Assume $\be$ to be a product system over $\mathbb N^2_0,$ $\sigma:\mathcal{B}\to B(\mathcal H)$ to be a representation and $T^{(i)}: E_i\to B(\mathcal H)$, $i\in I_2:=\{1,2\}$ to be the linear maps. Then the tuple $(\sigma, T^{(1)}, T^{(2)})$ is said to be {\rm completely contractive covariant representation} of $\be$ on $\mathcal{H}$ if each pair $(\sigma, T^{(i)})$ is a c.c.c. representation of $E_i$ on $\mathcal{H}$ and satisfies
	\begin{equation} \label{rep} \wT^{(j)} (I_{E_j} \ot \wT^{(i)}) = \wT^{(i)} (I_{E_i} \ot \wT^{(j)}) (t_{j,i} \ot I_{\mathcal H}),\quad \quad  j,i\in I_2.
	\end{equation}
\end{definition}

	We say that two such representations $(\sigma, T^{(1)}, T^{(2)})$ and $(\psi, V^{(1)}, V^{(2)})$ of $\be$ over $\mathbb N^2_0,$
	respectively on Hilbert spaces $\mathcal H$ and $\mathcal K$, are {\it isomorphic} if there exists a unitary $U:\mathcal H \to
	\mathcal K$ such that $U\sigma(a)=\psi(a)U$ and $V^{(i)}(x_i)U = U T^{(i)} (x_i)$ for all $a\in \mathcal{B}$ and $x_i\in E_i,i\in I_2.$

	Assume $\pi $ to be a representation of a $C^*$-algebra $\mathcal{B}$ on a Hilbert space $\mathcal{K}$ and $\mathbb{E}$ to be a product system over $\mathbb{N}_0^2.$ Suppose that $\Theta: E_2 \to B(\mathcal{K}, \mathcal{F}(E_1) \otimes_{\pi} \mathcal{K})$ is a completely contractive  bi-module map, that is, $\Theta$ is  completely contractive  and $\Theta(a \eta b)=\rho(a)\Theta(\eta)\rho(b)$ for all $\eta \in E_2, a,b \in \mathcal{B}.$ Define a contractive linear  map $\widetilde{\Theta}: E_2 \otimes \mathcal{K} \to \mathcal{F}(E_1) \otimes_{\pi} \mathcal{K}$ by $$\widetilde{\Theta}(\eta \otimes h)=\Theta(\eta)h ~~\:\:\: \mbox{for}~~~~~~\:\:\: \eta\in E_2,h\in \mathcal{K},$$ and it satisfy the relation $\widetilde{\Theta}(\phi_2(a) \ot I_{ \mathcal{K}})=\rho(a)\widetilde{\Theta},a\in \mathcal{B},$ where $\phi_2$ is a module left action of $\mathcal{B}$ on $E_2.$ A completely contractive map $M_{\Theta} :E_2  \to B(\mathcal{F}(E_1) \otimes_{\pi} \mathcal{K})$ \cite{STV22} is defined by
\begin{align*}
	M_{\Theta}(\eta) (S_n(x_n) h)=\widetilde{S}_n(I_{E_1^{\otimes n}} \otimes \widetilde{\Theta})(t_{2,1}^{(1,n)} \otimes I_{\mathcal{K}})(\eta \otimes x_n \otimes h),
\end{align*}
	where  $\eta \in E_2, x_n \in E_1^{\otimes n}, h\in \mathcal{K}, n \in \mathbb{N}_0,$ $t_{2,1}^{(1,n)}: E_2 \ot E_1^{\ot n}  \rightarrow E_1^{\ot n} \ot E_2 $ is a composition of the isomorphisms $\{t_{j,i}\: :\: 1 \leq j, i \leq 2\}$ and $(\rho, S)$ is the induced representation of $E_1$ induced by $\pi.$ Clearly, $(\rho,{M}_{\Theta})$ is a c.c.c. representation of $E_2$ on $\mathcal{F}(E_1) \otimes_{\pi} \mathcal{K},$ $M_{\Theta}(\eta)|_{\mathcal{K}}=\Theta(\eta)$ for each $\eta \in E_2$ and satisfy the relation
	\begin{align*}
		M_{\Theta}(\eta)\left(\bigoplus_{n \in \mathbb{N}_0}x_n \otimes h_n\right)=\sum_{n \in \mathbb{N}_0}\widetilde{S}_n(I_{E_1^{\otimes n}} \otimes \widetilde{\Theta})(t_{2,1}^{(1,n)} \otimes I_{\mathcal{K}})(\eta \otimes x_n \otimes h_n)
	\end{align*} 
	for all $\eta \in E_2, x_n \in E_1^{\otimes n}, h_n \in \mathcal{K}.$  It is easy to verify that
	$\widetilde{M}_{\Theta} (I_{E_2} \otimes \widetilde{S})=\widetilde{S} (I_{{E}_1} \otimes \widetilde{M}_{\Theta})(t_{2,1} \otimes I_{\mathcal{F}(E_1) \otimes \mathcal{K}}).$ Hence $(\rho, S, M_{\Theta})$ is a c.c.c. representation of $\mathbb{E}$ over $\mathbb{N}_0^2$ on $\mathcal{F}(E_1) \otimes _{\pi} \mathcal{K}.$ Note that $\Theta$ is an isometry if and only if $(\rho, M_\Theta)$ is an isometric covariant representation. In this paper, we focus on $\Theta$ such that $(\rho, M_\Theta)$ is a c.c.c. representation.

The concept of pure isometric representation of product system over $\mathbb{N}_0^2$ introduced in \cite{STV22} is an important development in the study of representation and Fredholm theory for $C^*$-algebras generated by commuting isometries. 
They showed that: Assume $(\sigma , V^{(1)},V^{(2)})$ to be a pure isometric covariant representation of $\mathbb{E}$ over $\mathbb{N}_0^2$ on $\mathcal{H},$ then the BCL- representation $(\rho,M_{{\Theta_1}}, M_{{\Theta_2}})$ \cite{STV22} of $(\sigma , V^{(1)},V^{(2)})$ is defined by 
$$\widetilde{\Theta}_1 = (P_{\mathcal{W}_1}^{\perp} + \widetilde{S}(I_{E_1\ot E_2}\ot P_{\mathcal{W}_1}))U'
$$ and $$
\widetilde{\Theta}_2 = (P_{\mathcal{W}_2}^{\perp} + \widetilde{S}(t_{2,1}\ot P_{\mathcal{W}_2}))U,
$$ where
$U'= \begin{bmatrix}
	\wV^{(1)}|_{E_1 \ot \mathcal{W}_2} & 0 \\
	0 & (I_{E_1}\ot \wV^{(2)*}|_{\wV^{(2)}(E_2 \ot \mathcal{W}_1)})
\end{bmatrix} : E_1 \ot \mathcal{W}_2 \oplus E_1 \ot \wV^{(2)}(E_2 \ot \mathcal{W}_1) \to 	\wV^{(1)}(E_1 \ot \mathcal{W}_2) \oplus E \ot \mathcal{W}_1$ and 

$U= \begin{bmatrix}
	\wV^{(2)}|_{E_2 \ot \mathcal{W}_1} & 0\\
	0 & (I_{E_2}\ot \wV^{(1)*}|_{\wV^{(1)}(E_1 \ot \mathcal{W}_2)})
\end{bmatrix} : E_2 \ot \mathcal{W}_1 \oplus E_2\ot \wV^{(1)}(E_1 \ot \mathcal{W}_2) \to \wV^{(2)}(E_2 \ot \mathcal{W}_1) \oplus E_2 \ot E_1 \ot \mathcal{W}_2$  are unitary isomorphisms.

In this paper we study similar result for completely contractive covariant representations of product system over $\mathbb{N}_0^2.$ 
Suppose that $(\sigma , T^{(1)},T^{(2)})$ is a c.c.c. representation of $\mathbb{E}$ over $\mathbb{N}_0^2$ on  $\mathcal{H}.$ Define $\widetilde{T}:=\widetilde{T}^{(1)}(I_{E_1} \otimes \widetilde{T}^{(2)}),$ then by Lemma \ref{MSC} $(\sigma, T)$ is also a c.c.c. representation of $E_1 \ot E_2$ on $\mathcal{H}.$ The representation $(\sigma , T^{(1)},T^{(2)})$ is called pure if $(\sigma, T)$ is pure. Note that $(\sigma , T^{(1)},T^{(2)}, T)$ is a c.c.c. representation of the product system  over $\mathbb{N}_0^3$ determined by the $C^*$-correspondences $\{E_1, E_2, E\}$ on $\mathcal{H}.$ Since
	\begin{align}\label{BB1}
	&I_{\mathcal H}-\widetilde{T}^{(1)}\widetilde{T}^{(1)^*}+\widetilde{T}^{(1)}(I_{E_1}\otimes(I_{\mathcal{H}}-\widetilde{T}^{(2)}\widetilde{T}^{(2)^*}))\widetilde{T}^{(1)^*}=	I_{\mathcal H}-{\widetilde T}{\widetilde T^*}\\& \nonumber=I_{\mathcal H}-\widetilde{T}^{(2)}\widetilde{T}^{(2)^*}+
	\quad \widetilde{T}^{(2)}(I_{E_2}\otimes(I_{\mathcal{H}}-\widetilde{T}^{(1)}\widetilde{T}^{(1)^*}))\widetilde{T}^{(2)^*}.
	\end{align}
This implies that
\begin{align*}
\|\Delta(T^{(1)})h\|^2+\|(I_{E_1} \ot \Delta(T^{(2)}))\wT^{(1)^*}h\|^2
=\|(I_{E_2} \ot \Delta(T^{(1)}))\wT^{(2)^*}h\|^2+\|\Delta(T^{(2)})h\|^2,
\end{align*} for all $h\in \mathcal{H}.$
Thus $$U:\{\Delta(T^{(1)})h\oplus (I_{E_1} \ot \Delta(T^{(2)}))\wT^{(1)^*}h: h\in \mathcal{H}\}\to \{(I_{E_2} \ot \Delta(T^{(1)}))\wT^{(2)^*}h\oplus \Delta(T^{(2)})h: h\in \mathcal{H}\}$$ define by
\begin{align}\label{equation1}
	 U(\Delta(T^{(1)})h,  (I_{E_1} \ot \Delta(T^{(2)}))\wT^{(1)^*}h)=((I_{E_2} \ot \Delta(T^{(1)}))\wT^{(2)^*}h, \Delta(T^{(2)})h) \quad h\in \mathcal H \end{align} is an isometric module map (see \cite{ARDS1}). If  $ \dim ((E_2 \ot \mathcal{D}_{*, T^{(1)}}) \oplus  \mathcal{D}_{*, T^{(2)}} )$=$ \dim (\mathcal{D}_{*, T^{(1)}} \oplus  (E_1 \ot\mathcal{D}_{*, T^{(2)}}))< \infty$, then $U$ extends to a unitary module map.

  Suppose that $(\sigma , T^{(1)},T^{(2)})$ is a c.c.c. representation of $\mathbb{E}$ over $\mathbb{N}_0^2$ on  $\mathcal{H}.$ If $(\sigma , T^{(1)})$ is pure, then by Theorem \ref{dilation}, we get an isometry $\Pi: \mathcal{H} \longrightarrow \mathcal{F}(E_1)\otimes\mathcal{D}_{ T^{(1)}}$ such that 
\begin{align}
	\rho(a)\Pi=\Pi \sigma(a) \:\: \mbox{and} \:\: (I_{E_1} \ot \Pi)\widetilde{T}^{(1)^*}=\widetilde{S}^{\mathcal{D}^*}\Pi, \:\:\:a \in \mathcal{B}, 
\end{align} where $(\rho, S^{\mathcal{D}})$ is an induced representaion of $E_1$ induced by $\sigma|_{\mathcal{D}_{T^{(1)}}}.$ 

Let $\pi$ be a representation of $\mathcal{B}$ on a Hilbert space $\mathcal{K}$, $(\rho', S )$ be the induced representaion of $E_1$ induced by $\pi$ and $V\in B({\mathcal{D}_{ {T^{(1)}}}},\mathcal{K})$ be an isometric module map. Define an isometry $\Pi_V: \mathcal{H} \longrightarrow \mathcal{F}(E_1)\otimes\mathcal{K}$  by
$\Pi_V:=(I_{\mathcal{F}(E_1)}\ot V)\Pi,$ then
\begin{align*} \rho'(a)\Pi_V=\Pi_V \sigma(a)    \quad and \quad  (I_{E_1}\ot \Pi_V)\wT^{(1)^*} =\widetilde{S}^{*}\Pi_V,
\end{align*}  for every $a\in \mathcal{B}.$ We conclude that $(\rho', S)$ is an isometric dilation of $(\sigma,T^{(1)}).$ Moreover, if $\mathcal{Q}:=\Pi_V\mathcal{H},$ then $\widetilde{S}^{*}(\mathcal{Q})\subseteq E_1\ot \mathcal{Q},$ that is, $\mathcal{Q}$ is co-invariant for $(\rho',S).$ Hence the representations $(\sigma, T^{(1)})$ and $(\pi',X)$ of ${E_1}$ acting on the Hilbert spaces $\mathcal{H}$ and $\mathcal{Q},$ respectively, are isomorphic, where
$\widetilde{X}= P_{\mathcal{Q}}\widetilde{S}|_{E_1\ot \mathcal{Q}}$ and $\pi'=\rho'|_{\mathcal{Q}}.$

\begin{theorem}\label{identity}
	Using the above notations, let \begin{equation}
			\label{unitary} U = \begin{bmatrix}A&B\\C&0\end{bmatrix}:
			\mathcal{K} \oplus  (E_1 \ot\mathcal{D}_{ T^{(2)}})\to (E_2 \ot \mathcal{K}) \oplus  \mathcal{D}_{ T^{(2)}},
		\end{equation} be a unitary module map such that $$U(V\Delta(T^{(1)})h,(I_{E_1} \ot \Delta(T^{(2)}))\wT^{(1)^*}h)=((I_{E_2} \ot V\Delta(T^{(1)}))\wT^{(2)^*}h,\Delta(T^{(2)})h)$$ for all $h\in \mathcal{H}.$ Then there exists a contractive linear map $\widetilde{\Phi}:E_2\ot \mathcal{K}\to \mathcal{F}(E_1)\ot \mathcal{K}$ such that $$ (I_{E_2}\ot \Pi_V)\wT^{(2)^*}=\widetilde{M}_{\Phi}^*\Pi_V,$$ where $\widetilde{\Phi}=A^*+(I_{E_1}\ot C^*)B^*.$
	  \end{theorem}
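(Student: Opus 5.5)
The plan is the Das--Sarkar--Sarkar argument in module form. First I will check that $\widetilde{\Phi}=A^*+(I_{E_1}\ot C^*)B^*$ is a well-defined contractive module map $E_2\ot\mathcal{K}\to \mathcal{K}\oplus (E_1\ot\mathcal{K})\subseteq \mathcal{F}(E_1)\ot\mathcal{K}$. Unitarity of $U$ gives $U^*=\begin{bmatrix}A^*&C^*\\B^*&0\end{bmatrix}$ and the relations $AA^*+BB^*=I$, $CC^*=I$, $AC^*=0$, $BC^*=0$ (from $UU^*=I$); similarly $U^*U=I$ gives $A^*A+C^*C=I$, $B^*B=I$, $A^*B=0$. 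Then $\widetilde{\Phi}^*\widetilde{\Phi}=AA^*+B(I\ot C C^*)B^*=AA^*+BB^*=I$, because the two summands lie in the orthogonal pieces $\mathcal{K}$ and $E_1\ot\mathcal{K}$ of the Fock space. So $\widetilde{\Phi}$ is even an isometry, and contractivity follows. It intertwines the left actions because $A,B,C$ are module maps. Hence $\Phi$ is a completely contractive bimodule map and $M_\Phi$ is defined as in the paper.

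Second, I reduce the identity to one on generators. Both sides are bounded. Write $\Pi_V h=\sum_{n\ge0}(I_{E_1^{\ot n}}\ot V\Delta(T^{(1)}))\wT_n^{(1)*}h$. It suffices to compare $\langle (I_{E_2}\ot\Pi_V)\wT^{(2)*}h,\eta\ot x_n\ot k\rangle$ with $\langle \Pi_V h, M_\Phi(\eta)(x_n\ot k)\rangle$ for $\eta\in E_2$, $x_n\in E_1^{\ot n}$, $k\in\mathcal{K}$. Equivalently, I show $\widetilde{M}_\Phi^*\Pi_V h=(I_{E_2}\ot\Pi_V)\wT^{(2)*}h$ degree by degree. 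By definition of $M_\Phi$, the adjoint on the $n$-th component is $(t_{2,1}^{(1,n)*}\ot I)(I_{E_1^{\ot n}}\ot\widetilde{\Phi}^*)$ applied to the components of degree $n$ and $n+1$. Concretely, $\widetilde{\Phi}^*$ applied to a pair $(k_0, y)\in\mathcal{K}\oplus(E_1\ot\mathcal{K})$ equals $Ak_0+B(I_{E_1}\ot C)y$.

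Third, the core computation. Set $h_n=\wT_n^{(1)*}h$. The degree-$n$ component of $\widetilde{M}_\Phi^*\Pi_V h$ is
$$(t^{*}\ot I)(I_{E_1^{\ot n}}\ot[A V\Delta(T^{(1)}) + B(I_{E_1}\ot C V\Delta(T^{(1)}))\wT^{(1)*}])h_n .$$
From the hypothesis on $U$ I read off two identities: $AV\Delta(T^{(1)})h+B(I\ot\Delta(T^{(2)}))\wT^{(1)*}h=(I_{E_2}\ot V\Delta(T^{(1)}))\wT^{(2)*}h$, and $CV\Delta(T^{(1)})=\Delta(T^{(2)})$. Substituting the second into the first shows that the bracket equals $(I_{E_2}\ot V\Delta(T^{(1)}))\wT^{(2)*}$. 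The degree-$n$ component therefore becomes $(t^*\ot I)(I_{E_1^{\ot n}}\ot (I_{E_2}\ot V\Delta(T^{(1)}))\wT^{(2)*})\wT_n^{(1)*}h$.

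Finally I use the commutation relation \eqref{rep}, iterated $n$ times, to move $\wT^{(2)*}$ past $\wT_n^{(1)*}$. This gives $(I_{E_1^{\ot n}}\ot\wT^{(2)*})\wT_n^{(1)*}=(t_{2,1}^{(1,n)}\ot I)(I_{E_2}\ot \wT_n^{(1)*})\wT^{(2)*}$, and the degree-$n$ component becomes $(I_{E_2}\ot(I_{E_1^{\ot n}}\ot V\Delta(T^{(1)}))\wT_n^{(1)*})\wT^{(2)*}h$. Summing over $n$ yields $(I_{E_2}\ot\Pi_V)\wT^{(2)*}h$. I expect the main obstacle to be this bookkeeping: keeping the flip isomorphisms $t_{2,1}^{(1,n)}$ consistent between the definition of $M_\Phi$ and the iterated relation \eqref{rep}, and justifying the adjoint formula for $\widetilde{M}_\Phi$ on the infinite sum. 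The latter will be handled by boundedness together with convergence of the series defining $\Pi_V$.
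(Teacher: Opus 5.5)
Your proof is correct and takes essentially the same route as the paper. Like the paper, you read off $CV\Delta(T^{(1)})=\Delta(T^{(2)})$ and the first-row identity from $U$, substitute one into the other, and match the degree-$n$ components of $\widetilde{M}_\Phi^*\Pi_V h$ and $(I_{E_2}\ot\Pi_V)\wT^{(2)^*}h$ using the iterated commutation relation; the paper does the same comparison via inner products against $\eta_n\in E_2\ot E_1^{\ot n}\ot\mathcal{K}$. One harmless slip: $BC^*$ is not a well-typed composition, so drop the listed relation $BC^*=0$; it is never used, and your additional observation $\widetilde{\Phi}^*\widetilde{\Phi}=AA^*+BB^*=I$ is correct.
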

	\begin{proof}
		Let $h \in \mathcal H$ we have
		\[\begin{bmatrix}A&B\\C&0\end{bmatrix} \begin{bmatrix} V\Delta(T^{(1)})h\\ (I_{E_1} \ot \Delta(T^{(2)}))\wT^{(1)^*}h \end{bmatrix}= \begin{bmatrix} (I_{E_2} \ot V\Delta(T^{(1)}))\wT^{(2)^*}h \\  \Delta(T^{(2)})h\end{bmatrix},\]
		that is,
		\begin{equation} \label{one} (I_{E_2} \ot V\Delta(T^{(1)}))\wT^{(2)^*}h= AV\Delta(T^{(1)})h+
			B(I_{E_1} \ot \Delta(T^{(2)}))\wT^{(1)^*}h
		\end{equation}
		and $\Delta(T^{(2)})h= CV\Delta(T^{(1)})h.$	It follows that
		\begin{equation*}\label{two}
			(I_{E_1}\ot \Delta(T^{(2)}))\wT^{(1)^*}h= (I_{E_1}\ot CV\Delta(T^{(1)}))\wT^{(1)^*}h,
		\end{equation*}
	From Equation (\ref{one}) we get
	\[\begin{split}
		(I_{E_2} \ot V\Delta(T^{(1)}))\wT^{(2)^*}h&= AV\Delta(T^{(1)})h+
		B(I_{E_1} \ot \Delta(T^{(2)}))\wT^{(1)^*}h \\ & =  AV\Delta(T^{(1)})h+
		B(I_{E_1}\ot CV\Delta(T^{(1)}))\wT^{(1)^*}h.
	\end{split}\]  Let $h\in \mathcal{H},\eta_n\in E_2\ot E_1^{\ot n}\ot \mathcal{K}$ and $n\in \mathbb{N}_{0}$ we obtain
\begin{align*}
&\langle (I_{E_1^{\ot n}}\ot (I_{E_2} \ot V\Delta(T^{(1)}))\wT^{(2)^*})\wT_n^{(1)^*}h,(t_{2,1}^{(1,n)}\ot I_{\mathcal{K}})\eta_n \rangle\\&=\langle (I_{E_1^{\ot n}}\ot AV\Delta(T^{(1)}))\wT_n^{(1)^*}h,(t_{2,1}^{(1,n)}\ot I_{\mathcal{K}})\eta_n \rangle \\& \quad\quad +\langle (I_{E_1^{\ot n}}\ot B(I_{E_1}\ot CV\Delta(T^{(1)}))\wT^{(1)^*})\wT_n^{(1)^*}h,(t_{2,1}^{(1,n)}\ot I_{\mathcal{K}})\eta_n \rangle \\&=\langle (I_{E_1^{\ot n}}\ot V\Delta(T^{(1)}))\wT_n^{(1)^*}h,(I_{E_1^{\ot n}}\ot A^*)(t_{2,1}^{(1,n)}\ot I_{\mathcal{K}})\eta_n \rangle \\& \quad\quad +\langle (I_{E_1^{\ot n}}\ot (I_{E_1}\ot V\Delta(T^{(1)})))\wT_{n+1}^{(1)^*}h,(I_{E_1^{\ot n}}\ot (I_{E_1}\ot C^*)B^*)(t_{2,1}^{(1,n)}\ot I_{\mathcal{K}})\eta_n \rangle.
\end{align*} Define a contractive linear map $\widetilde{\Phi}:E_2\ot \mathcal{K}\to \mathcal{F}(E_1)\ot \mathcal{K}$ by $\widetilde{\Phi}=A^*+(I_{E_1}\ot C^*)B^*,$ then  \begin{align*}
&\langle \widetilde{M}_{\Phi}^*\Pi_V h,\eta_n \rangle =\langle \Pi_V h,\widetilde{M}_{\Phi}\eta_n \rangle=\sum_{m \in \mathbb{N}_{0}}\langle (I_{\mathcal{F}(E_1)}\ot V)(I_{E_1^{\otimes m}}\otimes\Delta(T^{(1)}))\widetilde{T}_m^{(1)^*} h,\widetilde{M}_{\Phi}\eta_n\rangle\\&=\sum_{m \in \mathbb{N}_{0}}\langle (I_{E_1^{\otimes m}}\otimes V\Delta(T^{(1)}))\widetilde{T}_m^{(1)^*} h,\widetilde{S}_n(I_{E_1^{\otimes n}} \otimes \widetilde{\Phi})(t_{2,1}^{(1,n)} \otimes I_{\mathcal{K}})\eta_n\rangle\\&=\sum_{m \in \mathbb{N}_{0}}\langle (I_{E_1^{\otimes m}}\otimes V \Delta(T^{(1)}))\widetilde{T}_m^{(1)^*} h,(I_{E_1^{\otimes n}}\otimes (A^*+(I_{E_1}\ot C^*)B^*))(t_{2,1}^{(1,n)} \otimes I_{\mathcal{K}})\eta_n\rangle  \\&= \langle (I_{E_1^{\otimes n}}\otimes V \Delta(T^{(1)}))\widetilde{T}_n^{(1)^*} h,(I_{E_1^{\otimes n}}\otimes A^*)(t_{2,1}^{(1,n)} \otimes I_{\mathcal{K}})\eta_n\rangle \\&\quad\quad  + \langle (I_{E_1^{\otimes n+1}}\otimes V \Delta(T^{(1)}))\widetilde{T}_{n+1}^* h,(I_{E_1^{\otimes n}}\otimes (I_{E_1}\ot C^*)B^*)(t_{2,1}^{(1,n)} \otimes I_{\mathcal{K}})\eta_n\rangle.
\end{align*} Therefore $\langle \widetilde{M}_{\Phi}^*\Pi_V h,\eta_n \rangle=\langle (I_{E_1^{\ot n}}\ot (I_{E_2} \ot V\Delta(T^{(1)}))\wT^{(2)^*})\wT_n^{(1)^*}h,(t_{2,1}^{(1,n)}\ot I_{\mathcal{K}})\eta_n \rangle.$ On the other hand, since \begin{align*}
&\langle (I_{E_2}\ot \Pi_V)\wT^{(2)^*}h,\eta_n  \rangle=\langle (I_{E_2}\ot (I_{\mathcal{F}(E_1)}\ot V)\Pi)\wT^{(2)^*}h,\eta_n  \rangle\\&=\sum_{m \in \mathbb{N}_{0}}\langle (I_{E_2}\ot (I_{E_1^{\ot m}}\ot V\Delta(T^{(1)}))\wT_m^{(1)^*})\wT^{(2)^*}h,\eta_n  \rangle \\&=\langle (I_{E_2}\ot (I_{E_1^{\ot n}}\ot V\Delta(T^{(1)}))\wT_n^{(1)^*})\wT^{(2)^*}h,\eta_n  \rangle \\&=\langle (I_{E_2}\ot I_{E_1^{\ot n}}\ot V\Delta(T^{(1)}))(I_{E_2}\ot \wT_n^{(1)^*})\wT^{(2)^*}h,\eta_n  \rangle \\&=\langle (I_{E_2}\ot I_{E_1^{\ot n}}\ot V\Delta(T^{(1)}))(t_{1,2}^{(n,1)}\ot I_{\mathcal{H}})(I_{E_1^{\ot n}}\ot \wT^{(2)^*})\wT_n^{(1)^*}h,\eta_n  \rangle \\&=\langle (t_{1,2}^{(n,1)}\ot I_{\mathcal{K}})(I_{E_1^{\ot n}}\ot (I_{E_2}\ot  V\Delta(T^{(1)}))\wT^{(2)^*})\wT_n^{(1)^*}h,\eta_n  \rangle \\&=\langle (I_{E_1^{\ot n}}\ot (I_{E_2}\ot  V\Delta(T^{(1)}))\wT^{(2)^*})\wT_n^{(1)^*}h,(t_{2,1}^{(1,n)}\ot I_{\mathcal{K}})\eta_n  \rangle ,
\end{align*}we get $(I_{E_2}\ot \Pi_V)\wT^{(2)^*}=\widetilde{M}_{\Phi}^*\Pi_V.$ 
\end{proof}

\begin{observation}
	\begin{enumerate}
		\item In the setting of Theorem \ref{identity}, if $\mathcal{Q}=ran(\Pi_V),$ then $$ \widetilde{S}^{*}(\mathcal{Q})\subseteq E_1\ot \mathcal{Q},~~ \rho'(a)\mathcal{Q}\subseteq \mathcal{Q}\quad \mbox{and}\quad \widetilde{M}_{\Phi}^*(\mathcal{Q})\subseteq E_2\ot \mathcal{Q} \quad \mbox{for}\quad a\in \mathcal{B},$$ that is, $\mathcal{Q}$ is co-invariant for $(\rho',S,M_{\Phi}).$
		\item The representations $(\sigma, T^{(1)}, T^{(2)})$ and $(\pi',X,Y)$ of $\mathbb{E}$ over $\mathbb{N}_0^2$ on the Hilbert spaces $\mathcal{H}$ and $\mathcal{Q},$ respectively, are isomorphic, where $\pi'=\rho'|_{\mathcal{Q}},$
		$\widetilde{X}= P_{\mathcal{Q}}\widetilde{S}|_{E_1\ot \mathcal{Q}}$ and $\widetilde{Y}= P_{\mathcal{Q}}\widetilde{M}_{\Phi}|_{E_2\ot \mathcal{Q}}.$
	\end{enumerate}
\end{observation}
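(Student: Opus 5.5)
Both parts rest on the two intertwining identities already in hand: $(I_{E_1}\ot \Pi_V)\wT^{(1)^*}=\widetilde{S}^*\Pi_V$, established before Theorem \ref{identity}, and $(I_{E_2}\ot \Pi_V)\wT^{(2)^*}=\widetilde{M}_{\Phi}^*\Pi_V$, given by Theorem \ref{identity}. We also use $\rho'(a)\Pi_V=\Pi_V\sigma(a)$ and the fact that $\Pi_V$ is an isometry. The latter holds because $(\sigma,T^{(1)})$ is pure, so $\Pi$ is an isometry by Theorem \ref{dilation}, and $V$ is an isometry.

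For part (1), take $q=\Pi_V h\in\mathcal{Q}$. Then
\[
\widetilde{S}^*q=(I_{E_1}\ot \Pi_V)\wT^{(1)^*}h\in E_1\ot \mathcal{Q},\qquad \widetilde{M}_{\Phi}^*q=(I_{E_2}\ot\Pi_V)\wT^{(2)^*}h\in E_2\ot\mathcal{Q},
\]
and $\rho'(a)q=\Pi_V\sigma(a)h\in\mathcal{Q}$. Since $\Pi_V$ is an isometry, $\mathcal{Q}$ is closed. Hence $E_i\ot\mathcal{Q}$ is a closed subspace of $E_i\ot(\mathcal{F}(E_1)\ot\mathcal{K})$, and the inclusions are genuine inclusions of closed subspaces. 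Because $\rho'$ is a $*$-representation, $\mathcal{Q}$ is in fact $\rho'(\mathcal{B})$-reducing, so $\pi'=\rho'|_{\mathcal{Q}}$ is a representation.

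For part (2), let $W:=\Pi_V:\mathcal{H}\to\mathcal{Q}$, which is a unitary onto $\mathcal{Q}$. Taking adjoints in the two intertwining identities and compressing to $\mathcal{Q}$ gives, for $i=1,2$,
\[
W\wT^{(i)}=P_{\mathcal{Q}}\widetilde{Z}_i(I_{E_i}\ot W),
\]
where $Z_1=S$ and $Z_2=M_\Phi$. Indeed, for $\xi\in E_i\ot\mathcal{H}$ and $q=Wk$,
\[
\langle W\wT^{(i)}\xi,q\rangle=\langle \xi,\wT^{(i)^*}k\rangle=\langle (I\ot W)\xi,(I\ot W)\wT^{(i)^*}k\rangle=\langle \widetilde{Z}_i(I\ot W)\xi,q\rangle.
\]
Here $I\ot W$ is an isometry, which holds because $W$ is a module map ($W\sigma(a)=\rho'(a)W$), so $I_{E_i}\ot W$ is well defined and isometric on the interior tensor product. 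Thus $\widetilde{X}=W\wT^{(1)}(I\ot W)^{*}$ and $\widetilde{Y}=W\wT^{(2)}(I\ot W)^*$ on $E_i\ot\mathcal{Q}$. From this:
\begin{itemize}
\item $(\pi',X,Y)$ is unitarily equivalent via $W$ to $(\sigma,T^{(1)},T^{(2)})$.
\item $X$ and $Y$ are completely contractive, by Lemma \ref{MSC}.
\item The covariance relations hold.
\item The commutation relation (\ref{rep}) transfers from the $T$'s to $X,Y$.
\end{itemize}
So $(\pi',X,Y)$ is a c.c.c. representation of $\be$ and is isomorphic to $(\sigma,T^{(1)},T^{(2)})$. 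Concretely, $X(x)W=WT^{(1)}(x)$ follows by evaluating on $x\ot h$.

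The only mildly delicate point is justifying that $I_{E_i}\ot W$ maps $E_i\ot\mathcal{H}$ onto $E_i\ot\mathcal{Q}$ unitarily. This follows from $W$ being a unitary module map between $(\mathcal{H},\sigma)$ and $(\mathcal{Q},\pi')$: the map preserves the inner products $\langle x\ot h,y\ot k\rangle=\langle h,\sigma(\langle x,y\rangle)k\rangle$, and its range is dense and closed.
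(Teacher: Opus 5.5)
Your argument is correct and follows the paper's intended route. The paper gives no separate proof of this observation. It treats it as an immediate consequence of $\rho'(a)\Pi_V=\Pi_V\sigma(a)$, $(I_{E_1}\ot\Pi_V)\wT^{(1)^*}=\widetilde{S}^*\Pi_V$ and Theorem \ref{identity}, with $\Pi_V$ serving as the unitary from $\mathcal{H}$ onto $\mathcal{Q}$. Your extra checks (closedness of $\mathcal{Q}$, $\rho'(\mathcal{B})$-reducibility, and unitarity of $I_{E_i}\ot\Pi_V$ onto $E_i\ot\mathcal{Q}$) supply exactly the details the paper leaves implicit.
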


\section{Dilating to pure isometric covariant representations}
We now demonstrate that a pure contractive covariant representation can be dilated to a pure isometric covariant representation. First, we explicitly outline the construction of these dilations in the context of finite-dimensional defect spaces.
	\begin{theorem}\label{JJ3}
	Assume $(\sigma, T^{(1)}, T^{(2)})$ to be a pure c.c.c. representation  of $\be$ over $\mathbb N^2_0$ on $\mathcal H$ such that $\mbox{dim~} (\mathcal{D}_{T^{(1)}} \oplus (E_1 \ot\mathcal{D}_{ T^{(2)}}))=\mbox{dim~} (\mathcal{D}_{ T^{(2)}}\oplus (E_2\ot \mathcal{D}_{ T^{(1)}}))< \infty.$ Then $(\sigma, T^{(1)}, T^{(2)})$ dilates to a pure isometric covariant representation.
\end{theorem}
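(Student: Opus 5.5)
The plan is to imitate the Das--Sarkar--Sarkar construction for pure pairs of commuting contractions, using Theorem \ref{identity} as the engine. Concretely, I will dilate $(\sigma,T^{(1)})$ by an induced representation $(\rho',S)$ on $\mathcal{F}(E_1)\ot\mathcal{K}$. Then I will choose the unitary $U$ of (\ref{unitary}) so that the map $\widetilde{\Phi}=A^*+(I_{E_1}\ot C^*)B^*$ is an isometry. With that choice, $(\rho',S,M_\Phi)$ is a pure isometric covariant representation of $\be$, and $\Pi_V$ intertwines it with $(\sigma,T^{(1)},T^{(2)})$.

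\emph{Step 1 (purity of $T^{(1)}$).} Theorem \ref{identity} needs $(\sigma,T^{(1)})$ to be pure, so that $\Pi$ is an isometry by Theorem \ref{dilation}. I expect to derive this from purity of $(\sigma,T)$, where $\wT=\wT^{(1)}(I_{E_1}\ot\wT^{(2)})$. If that derivation fails, I will instead read the hypothesis as including purity of $T^{(1)}$, as the setup before Theorem \ref{identity} does.

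\emph{Step 2 (building the unitary).} From (\ref{BB1}), the map $U$ in (\ref{equation1}) is an isometric module map between the two closed spans inside $\mathcal{D}_{T^{(1)}}\oplus(E_1\ot\mathcal{D}_{T^{(2)}})$ and $(E_2\ot\mathcal{D}_{T^{(1)}})\oplus\mathcal{D}_{T^{(2)}}$. The finite, equal dimension hypothesis lets it extend to a unitary module map $W$ between the full spaces. To obtain the block form of (\ref{unitary}), with zero lower-right corner, I will enlarge the space. Set $\mathcal{K}:=\mathcal{D}_{T^{(1)}}\oplus\mathcal{L}$ for a suitable auxiliary module, and let $V$ be the inclusion of $\mathcal{D}_{T^{(1)}}$ into $\mathcal{K}$. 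I will then add the trivial identity pieces, in the spirit of the BCL unitaries $U,U'$ recalled from \cite{STV22}. The enlarged $U$ should agree with $W$ on the vectors $(V\Delta(T^{(1)})h,(I_{E_1}\ot\Delta(T^{(2)}))\wT^{(1)*}h)$ and have $0$ in the $(E_1\ot\mathcal{D}_{T^{(2)}})\to\mathcal{D}_{T^{(2)}}$ entry.

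\emph{Step 3 (the dilation is isometric and pure).} Theorem \ref{identity} then gives $(I_{E_2}\ot\Pi_V)\wT^{(2)*}=\widetilde{M}_\Phi^*\Pi_V$, and we also have $(I_{E_1}\ot\Pi_V)\wT^{(1)*}=\widetilde{S}^*\Pi_V$. Together with the Observation, these show that $\mathcal{Q}=\Pi_V\mathcal{H}$ is co-invariant and that $(\sigma,T^{(1)},T^{(2)})$ is the compression of $(\rho',S,M_\Phi)$ to $\mathcal{Q}$. The relation $\widetilde{M}_\Phi(I_{E_2}\ot\widetilde{S})=\widetilde{S}(I_{E_1}\ot\widetilde{M}_\Phi)(t_{2,1}\ot I)$ was noted earlier in the paper. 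It then remains to check two things:
\begin{enumerate}
\item $\widetilde{\Phi}$ is an isometry, so that $M_\Phi$ is isometric. This is the BCL computation $\widetilde{\Phi}^*\widetilde{\Phi}=AA^*+BB^*=I$, using that $U$ is unitary with $C\,\cdot 0$ structure, hence $AA^*+BB^*=I$ and $A C^*=0$-type identities.
\item The product representation $\widetilde{S}(I_{E_1}\ot\widetilde{M}_\Phi)$ is pure. I expect this to follow because $\widetilde{S}_n^*\to0$ strongly on $\mathcal{F}(E_1)\ot\mathcal{K}$, together with a factorisation of the product through shifts.
\end{enumerate}

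The main obstacle I expect is Step 2: producing a unitary with the exact block shape (\ref{unitary}) whose associated $\widetilde{\Phi}$ is isometric, not merely contractive. My first attempt will be to copy the BCL form $\widetilde{\Phi}=(P^\perp+\widetilde{S}(\cdot\ot P))W^*$ from \cite{STV22}, and to read off $A,B,C$ from it.
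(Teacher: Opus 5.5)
Your plan has a genuine gap, and it sits exactly where you placed the fallback in Step 1. The theorem assumes only that the product $(\sigma,T)$, $\wT=\wT^{(1)}(I_{E_1}\ot\wT^{(2)})$, is pure, and this does not make $(\sigma,T^{(1)})$ pure. Take $\mathcal{B}=E_1=E_2=\mathbb{C}$, $T^{(1)}=I\oplus M_z$ and $T^{(2)}=M_z\oplus I$ on $H^2\oplus H^2$. Then $T=M_z\oplus M_z$ is pure and every defect space is one-dimensional, so all hypotheses hold. Yet neither factor is pure, so swapping the roles of $E_1$ and $E_2$ does not help.

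This cannot be repaired inside your scheme. From $(I_{E_1}\ot\Pi_V)\wT^{(1)*}=\widetilde{S}^*\Pi_V$ with $\Pi_V$ isometric you get $\|\wT_n^{(1)*}h\|=\|\widetilde{S}_n^*\Pi_Vh\|\to 0$. So any dilation in which $T^{(1)}$ is a compression of the shift on $\mathcal{F}(E_1)\ot\mathcal{K}$ forces $T^{(1)}$ to be pure. Adding that as a hypothesis proves a strictly weaker statement.

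Step 2 also fails independently of purity. The second row of the unitary (\ref{unitary}), applied to the prescribed vectors, says $CV\Delta(T^{(1)})h=\Delta(T^{(2)})h$ with $C$ a contraction. So a zero lower-right corner forces $\|\Delta(T^{(2)})h\|\le\|\Delta(T^{(1)})h\|$ for all $h$, whatever auxiliary summands you add to $\mathcal{K}$ or to $\mathcal{D}_{T^{(2)}}$. Take $T^{(1)}=N\ot I$ and $T^{(2)}=I\ot N$ on $\mathbb{C}^2\ot\mathbb{C}^2$, with $Ne_1=e_2$, $Ne_2=0$. Both factors are nilpotent and the hypotheses hold, but $h=e_2\ot e_1$ gives $\Delta(T^{(1)})h=0\neq\Delta(T^{(2)})h$.

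The missing idea is the choice of base representation. The paper runs the mechanism of Theorem \ref{identity} with $(E,T)$, $E=E_1\ot E_2$, in place of $(E_1,T^{(1)})$. It dilates the pure product via its Poisson kernel $\Pi:\mathcal{H}\to\mathcal{F}(E)\ot\mathcal{D}_T$. It then composes with the isometry $V\Delta(T)h=(\Delta(T^{(2)})h,(I_{E_2}\ot\Delta(T^{(1)}))\wT^{(2)*}h)$ into $\mathcal{K}=\mathcal{D}_{T^{(2)}}\oplus(E_2\ot\mathcal{D}_{T^{(1)}})$, which (\ref{BB1}) provides.

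Here $\|\Delta(T)h\|$ dominates both $\|\Delta(T^{(1)})h\|$ and $\|\Delta(T^{(2)})h\|$. Indeed $\Delta(T^{(2)})h$ is literally a component of $V\Delta(T)h$, so the zero-corner unitary exists with $C=j_1^*$. Finite dimensionality is what lets the defining partial isometry on $E_2\ot\mathcal{K}$ extend to a unitary $U$. This yields the BCL symbol $\widetilde{\Phi}_2=(P^{\perp}+\widetilde{S}(t_{2,1}\ot P))U$, and $\widetilde{\Phi}_1$ symmetrically. Purity of the dilation is then inherited from the shift $S$ on $\mathcal{F}(E)\ot\mathcal{K}$, which dilates the product rather than either factor.

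Your computation $\widetilde{\Phi}^*\widetilde{\Phi}=AA^*+BB^*=I$ from $CC^*=I$ and $AC^*=0$ is correct and carries over to that setting.
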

\begin{proof}
	Since $(\sigma, T)$ is pure c.c.c. representation of $E:=E_1 \ot E_2$ on $\mathcal H,$ where $\widetilde{T}=\widetilde{T}^{(1)}(I_{E_1} \otimes \widetilde{T}^{(2)}).$ Let $\Pi: \mathcal{H} \longrightarrow \mathcal{F}(E)\otimes\mathcal{D}_{T}$ be the isometric dilation of $(\sigma,T)$ (see Theorem \ref{dilation}). Let $\mathcal K= \mathcal{D}_{T^{(2)}} \oplus(E_2 \ot\mathcal{D}_{T^{(1)}}),$ from Equation (\ref{BB1}), we define an isometric module map $V:\mathcal{D}_{T}\to  \mathcal K$ by
	$$V(\Delta(T)h):=(\Delta(T^{(2)})h,  (I_{E_2} \ot \Delta(T^{(1)}))\wT^{(2)^*}h)$$ for $h\in \mathcal{H}.$ Consequently, $\Pi_V:=(I_{\mathcal{F}(E)}\ot V)\Pi: \mathcal{H} \longrightarrow \mathcal{F}(E)\otimes\mathcal{K}$ is an isometric dilation of $(\sigma,T).$ Thus $\wT\cong P_{\mathcal{Q}}\widetilde{S}|_{E\ot \mathcal{Q}}$, where $\mathcal{Q}=\Pi_V\mathcal{H},$ $(\rho,S)$ is an induced representation of $E$ and $\widetilde{S}^{*}(\mathcal{Q})\subseteq E\ot \mathcal{Q}$ (see Theorem \ref{identity}). 
	Since $$(I_{E_2}\ot V\Delta(T))\wT^{(2)^*}h=((I_{E_2}\ot \Delta(T^{(2)}))\wT^{(2)^*}h,(I_{E_2^{\ot 2}}\ot \Delta(T^{(1)}))\wT_2^{(2)^*}h),\quad  h\in \mathcal{H}.$$
	Let $j_1 : \mathcal{D}_{T^{(2)}} \raro \mathcal{K}$ and $j_2 :  E_2 \ot\mathcal{D}_{ T^{(1)}} \raro \mathcal{K}$ be
	the inclusion maps, defined by
	\[
	j_1(h_1) := (h_1,0)\quad  \mbox{and} \quad j_2 (h_2) := (0,h_2)
	\quad (h_1 \in \mathcal{D}_{T^{(2)}}, h_2 \in  E_2 \ot\mathcal{D}_{ T^{(1)}}).
	\]
	Then $P : = j_1 j_1^* \in B(\mathcal{K})$ is an orthogonal
	projection of $\mathcal{K}$ onto $ \mathcal{D}_{ T^{(2)}}$, i.e.,
	\[
	P(h_1,h_2) = (h_1,0) \quad \quad \quad (h_1,h_2) \in \mathcal{K}.
	\]
	Thus, $j_2 j_2^* = P^\perp$ is the orthogonal projection of $\mathcal{K}$
	onto $E_2 \ot\mathcal{D}_{ T^{(1)}}.$ Since $\mbox{dim~} (\mathcal{D}_{T^{(1)}} \oplus (E_1 \ot\mathcal{D}_{ T^{(2)}}))< \infty$ and $\mbox{dim~} (\mathcal{D}_{ T^{(2)}}\oplus (E_2\ot \mathcal{D}_{ T^{(1)}}))< \infty.$ Define a
	unitary module map $U:E_2\ot\mathcal{K}\to E_2\ot E_1 \ot \mathcal{D}_{ T^{(2)}}\oplus E_2\ot \mathcal{D}_{ T^{(1)}}$ by
	\begin{align*}U((I_{E_2}\ot \Delta(T^{(2)}))\wT^{(2)^*}h,(I_{E_2^{\ot 2}} \ot \Delta(T^{(1)}))\wT_2^{(2)^*}h):=((t_{1,2}\ot \Delta(T^{(2)}))\wT^{*}h,(I_{E_2}\ot \Delta(T^{(1)}))\wT^{(2)^*}h).\end{align*} 
	Then
	\[
	U_1=
	\begin{bmatrix}
		U^*P^{\perp}& U^*(t_{1,2}\ot j_1)\\
		j_1^*& 0
	\end{bmatrix}: \mathcal{K} \oplus  (E \ot\mathcal{D}_{ T^{(2)}})\to (E_2 \ot \mathcal{K}) \oplus  \mathcal{D}_{ T^{(2)}}
	\] is a unitary module map. For $h\in \mathcal{H}$ we have
	\begin{align*}
		&U_1(V\Delta(T)h,(I_{E}\ot \Delta(T^{(2)}))\wT^{*}h)=U_1(\Delta(T^{(2)})h,  (I_{E_2} \ot \Delta(T^{(1)}))\wT^{(2)^*}h,(I_{E}\ot \Delta(T^{(2)}))\wT^{*}h)\\&=(U^*((t_{1,2}\ot \Delta(T^{(2)}))\wT^{*}h,(I_{E_2}\ot \Delta(T^{(1)}))\wT^{(2)^*}h),\Delta(T^{(2)})h)\\&=((I_{E_2}\ot \Delta(T^{(2)}))\wT^{(2)^*}h,(I_{E_2^{\ot 2}} \ot \Delta(T^{(1)}))\wT_2^{(2)^*}h,\Delta(T^{(2)})h)\\&=((I_{E_2}\ot V\Delta(T))\wT^{(2)^*}h,\Delta(T^{(2)})h).
	\end{align*} From Theorem \ref{identity}, there exists a contractive linear map $\widetilde{\Phi}_2:E_2 \ot \mathcal{K}\to \mathcal{F}(E)\ot \mathcal{K}$ such that $$(I_{E_2}\ot \Pi_V)\wT^{(2)^*}=\widetilde{M}_{{\Phi}_2}^*\Pi_V,$$ where $\widetilde{\Phi}_2=P^{\perp}U+(I_{E}\ot j_1)(t_{2,1}\otimes j_1^*)U=(P^{\perp} + \widetilde{S}(t_{2,1}\otimes P))U.$ 

Similarly, since $\mathcal{D}_{T^{(1)}} \oplus (E_1 \ot\mathcal{D}_{ T^{(2)}})\cong \mathcal{D}_{ T^{(2)}}\oplus (E_2\ot \mathcal{D}_{ T^{(1)}}),$  let $\mathcal{K}=\mathcal{D}_{T^{(1)}} \oplus(E_1 \ot\mathcal{D}_{T^{(2)}})$ and 
$V(\Delta(T)h):=(\Delta(T^{(1)})h,  (I_{E_1} \ot \Delta(T^{(2)}))\wT^{(1)^*}h)$ for $h\in \mathcal{H}.$ 
Since $$(I_{E_1}\ot V\Delta(T))\wT^{(1)^*}h=((I_{E_1}\ot \Delta(T^{(1)}))\wT^{(1)^*}h,(I_{E_1^{\ot 2}}\ot \Delta(T^{(2)}))\wT_2^{(1)^*}h), \quad  h\in \mathcal{H}.$$
Let $i_1 : \mathcal{D}_{T^{(1)}} \raro \mathcal{K}$ and $i_2 :  E_1 \ot\mathcal{D}_{ T^{(2)}} \raro \mathcal{K}$ be
the inclusion maps, defined by
\[
i_1(h_1) := (h_1,0)\quad  \mbox{and} \quad i_2 (h_2) := (0,h_2)
\quad (h_1 \in \mathcal{D}_{T^{(1)}}, h_2 \in  E_1 \ot\mathcal{D}_{T^{(2)}}).
\]
Then $Q : = i_1 i_1^* \in B(\mathcal{K})$ is an orthogonal
projection of $\mathcal{K}$ onto $ \mathcal{D}_{ T^{(1)}}$, i.e.,
\[
Q(h_1,h_2) = (h_1,0) \quad \quad \quad (h_1,h_2) \in \mathcal{K}.
\]
Thus $i_2 i_2^* = Q^\perp$ is the orthogonal projection of $\mathcal{K}$
onto $E_1 \ot\mathcal{D}_{ T^{(2)}}.$ Define a
unitary module map $U':E_1\ot\mathcal{K}\to E_1\ot E_2 \ot \mathcal{D}_{ T^{(1)}}\oplus E_1\ot \mathcal{D}_{ T^{(2)}}$ by
\begin{align*}U'((I_{E_1}\ot \Delta(T^{(1)}))\wT^{(1)^*}h,(I_{E_1^{\ot 2}} \ot \Delta(T^{(2)}))\wT_2^{(1)^*}h):=((I_E\ot \Delta(T^{(1)}))\wT^{*}h,(I_{E_1}\ot \Delta(T^{(2)}))\wT^{(1)^*}h).\end{align*} 
Then
\[
U_2=
\begin{bmatrix}
	U'^*Q^{\perp}& U'^*(I_E\ot i_1)\\
	i_1^*& 0
\end{bmatrix}: \mathcal{K} \oplus  (E \ot\mathcal{D}_{ T^{(1)}})\to (E_1 \ot \mathcal{K}) \oplus  \mathcal{D}_{ T^{(1)}}
\] is a unitary module map. For $h\in \mathcal{H}$ we have
\begin{align*}
	&U_2(V\Delta(T)h,(I_{E}\ot \Delta(T^{(1)}))\wT^{*}h)=U_2(\Delta(T^{(1)})h,  (I_{E_1} \ot \Delta(T^{(2)}))\wT^{(1)^*}h,(I_{E}\ot \Delta(T^{(1)}))\wT^{*}h)\\&=(U'^*((I_E\ot \Delta(T^{(1)}))\wT^{*}h,(I_{E_1}\ot \Delta(T^{(2)}))\wT^{(1)^*}h),\Delta(T^{(1)})h)\\&=((I_{E_1}\ot \Delta(T^{(1)}))\wT^{(1)^*}h,(I_{E_1^{\ot 2}} \ot \Delta(T^{(2)}))\wT_2^{(1)^*}h,\Delta(T^{(1)})h)\\&=((I_{E_1}\ot V\Delta(T))\wT^{(1)^*}h,\Delta(T^{(1)})h).
\end{align*} From Theorem \ref{identity}, there exists a contractive linear map $\widetilde{\Phi}_1:E_1 \ot \mathcal{K}\to \mathcal{F}(E)\ot \mathcal{K}$ such that $$(I_{E_1}\ot \Pi_{V})\wT^{(1)^*}=\widetilde{M}_{{\Phi}_1}^*\Pi_{V},$$ where $\widetilde{\Phi}_1=Q^{\perp}U'+(I_{E}\ot i_1i_1^*)U'=(Q^{\perp} + \widetilde{S}(I_E\otimes Q))U'.$ We conclude that $(\rho,M_{\Phi_1},M_{\Phi_2})$ is the pure isometric dilation of $(\sigma,T^{(1)},T^{(2)}).$ This complets the proof.
\end{proof}

We now prove the general setting, i.e., $\mbox{dim~} (\mathcal{D}_{T^{(1)}} \oplus (E_1 \ot\mathcal{D}_{ T^{(2)}}))= \infty ~~\mbox{or}~~~\mbox{dim~} (\mathcal{D}_{ T^{(2)}}\oplus (E_2\ot \mathcal{D}_{ T^{(1)}}))= \infty.$ The proof is largely identical to that of the previous theorem.

	\begin{theorem}\label{DSAR4}
	Assume $(\sigma, T^{(1)}, T^{(2)})$ to be a pure c.c.c. representation of $\be$ over $\mathbb N^2_0$ on $\mathcal H$ such that $\mbox{dim~} (\mathcal{D}_{T^{(1)}} \oplus (E_1 \ot\mathcal{D}_{ T^{(2)}}))=\mbox{dim~} (\mathcal{D}_{ T^{(2)}}\oplus (E_2\ot \mathcal{D}_{ T^{(1)}}))= \infty,$ then $(\sigma, T^{(1)}, T^{(2)})$ dilates to a pure isometric covariant representation.
\end{theorem}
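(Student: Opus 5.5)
The plan is to repeat the construction in the proof of Theorem \ref{JJ3} essentially verbatim, and to change only the one place where finite dimensionality was used. That place is the extension of the partially defined isometric module maps $U$ and $U'$ to unitary module maps. In the infinite-dimensional case we enlarge the auxiliary space by an infinite-multiplicity summand so that all the relevant orthocomplements have the same (infinite) dimension.

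First, as before, take the Poisson kernel $\Pi:\mathcal H\to\mathcal F(E)\ot\mathcal D_T$ of the pure representation $(\sigma,T)$, where $\wT=\wT^{(1)}(I_{E_1}\ot\wT^{(2)})$. By Theorem \ref{dilation} this is an isometry intertwining $\wT^*$ with $\wt S^*$. Then use Equation (\ref{BB1}) to define the isometric module map
\[
V:\mathcal D_T\to\mathcal K_0:=\mathcal D_{T^{(2)}}\oplus(E_2\ot\mathcal D_{T^{(1)}}).
\]
Now replace $\mathcal K_0$ by $\mathcal K:=\mathcal K_0\oplus \mathcal L$, where $\mathcal L:=\ell^2(\mathbb N)\ot \mathcal K_0$ (equivalently, an infinite ampliation of $\sigma$ restricted to the defect spaces), with the $\mathcal B$-action given by the ampliation of $\sigma$. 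Regard $V$ as an isometric module map into $\mathcal K$.

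Next, build the unitary module map $U$ from Theorem \ref{JJ3}. The map
\[
\big((I_{E_2}\ot\Delta(T^{(2)}))\wT^{(2)^*}h,\,(I_{E_2^{\ot2}}\ot\Delta(T^{(1)}))\wT_2^{(2)^*}h\big)\mapsto\big((t_{1,2}\ot\Delta(T^{(2)}))\wT^*h,\,(I_{E_2}\ot\Delta(T^{(1)}))\wT^{(2)^*}h\big)
\]
is isometric by Equation (\ref{BB1}) applied to $\wT^{(2)^*}h$, and it is a module map. Its initial and final spaces sit inside $E_2\ot\mathcal K$ and $(E_2\ot E_1\ot\mathcal D_{T^{(2)}})\oplus(E_2\ot\mathcal D_{T^{(1)}})\oplus(E_2\ot \mathcal L)$, respectively. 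Because the padding by $\mathcal L$ gives both orthocomplements infinite multiplicity of the same representation of $\mathcal B$, they are unitarily equivalent as $\mathcal B$-modules. So the partial isometry extends to a unitary module map $U$. The argument is the standard one for isometries between Hilbert modules, applied to the commutant of $\sigma(\mathcal B)$; it is the same remark as after Equation (\ref{equation1}).

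Then define
\[
U_1=\begin{bmatrix}U^*P^\perp & U^*(t_{1,2}\ot j_1)\\ j_1^* & 0\end{bmatrix},
\]
where $P^\perp$ is now the projection onto $(E_2\ot\mathcal D_{T^{(1)}})\oplus\mathcal L$. Check that $U_1$ is a unitary module map satisfying the hypothesis of Theorem \ref{identity}; the computation is identical to the one in Theorem \ref{JJ3}. Theorem \ref{identity} then yields
\[
\wt\Phi_2=(P^\perp+\wt S(t_{2,1}\ot P))U \quad\text{with}\quad (I_{E_2}\ot\Pi_V)\wT^{(2)^*}=\wt M_{\Phi_2}^*\Pi_V.
\]
Symmetrically, produce $U'$, $U_2$ and $\wt\Phi_1=(Q^\perp+\wt S(I_E\ot Q))U'$. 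Here we use the hypothesis $\dim(\mathcal D_{T^{(1)}}\oplus E_1\ot\mathcal D_{T^{(2)}})=\dim(\mathcal D_{T^{(2)}}\oplus E_2\ot\mathcal D_{T^{(1)}})=\infty$, together with the padding, to identify the two choices of $\mathcal K$ as the same $\mathcal B$-module, so that both $\Phi_1$ and $\Phi_2$ act on one space $\mathcal F(E)\ot\mathcal K$.

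Finally, observe that $\wt\Phi_i$ has the BCL form of \cite{STV22}: a unitary followed by $P^\perp+\wt S(\cdot\ot P)$. Hence $\wt\Phi_i$ is an isometry and $(\rho,M_{\Phi_1},M_{\Phi_2})$ is an isometric covariant representation. Its product is the shift $S$ on $\mathcal F(E)\ot\mathcal K$, which is pure, and $\mathcal Q=\Pi_V\mathcal H$ is co-invariant by the Observation. The main obstacle is the extension step. We must make sure that the orthocomplements of the initial and final spaces are isomorphic as $\mathcal B$-modules, not merely of equal Hilbert dimension, while keeping compatibility of $U$ and $U'$ with one common $\mathcal K$. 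The infinite ampliation $\mathcal L$ is designed to absorb this. One should also verify that the padding does not break the commutation relation between $M_{\Phi_1}$ and $M_{\Phi_2}$; this follows, as in \cite{STV22}, once $U$ and $U'$ are chosen mutually compatible.
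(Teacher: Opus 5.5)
You follow essentially the paper's route: pad by an infinite-dimensional summand so that the partially defined isometry extends to a unitary, form $U_1$, apply Theorem \ref{identity} to get $\Phi_2$, and repeat for $\Phi_1$. The gap is the step you postpone in your last sentence. It is the actual content of the statement, and your construction does not supply it. Theorem \ref{identity} only controls $\widetilde M_{\Phi_i}^*$ on $\mathcal Q=\Pi_V\mathcal H$. A pure isometric dilation also needs $(M_{\Phi_1},M_{\Phi_2})$ to satisfy (\ref{rep}) and $\widetilde M_{\Phi_1}(I_{E_1}\ot\widetilde M_{\Phi_2})=\widetilde S$ on all of $\mathcal F(E)\ot\mathcal K$. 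With $U$ and $U'$ chosen as independent unitary extensions, this fails. Take $\mathcal B=E_1=E_2=\mathbb C$ and $T^{(1)}=T^{(2)}=0$ on an infinite-dimensional $\mathcal H$. Then $\Pi_Vh=(h,0,0)$ lies in the degree-zero summand, and the partial isometries you extend are defined on $\{0\}$, so $U=U'=I$ are admissible. You must identify $\mathcal K'$ with $\mathcal K$ so that $V'$ goes to $V$; otherwise the two intertwining relations involve different maps $\Pi_V$. Doing so gives $\Phi_1(z)=\Phi_2(z)=P^\perp+zP$. Every identity you verify holds, yet $M_{\Phi_1}M_{\Phi_2}=M_{z^2}\ot P+I\ot P^\perp$ is neither the shift nor pure. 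The paper's ``Similarly'' passes over the same point, so following the paper does not close the gap.

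The missing idea is to derive both maps from one unitary, as in BCL and \cite{STV22}:
\begin{enumerate}
\item Pad $\mathcal D_{T^{(i)}}$ to $\mathcal W_i$, and extend the map in (\ref{equation1}) to a single unitary module map $W:\mathcal W_1\oplus(E_1\ot\mathcal W_2)\to\mathcal W_2\oplus(E_2\ot\mathcal W_1)=:\mathcal K$.
\item Realise $\mathcal K'$ as $\mathcal K$ via $W$ itself, so $V'=W^*V$.
\item Take $U=I_{E_2}\ot W^*$ and $U'=I_{E_1}\ot W$, up to reordering summands.
\end{enumerate}
Using $(I_{E_2}\ot\wT^{(1)^*})\wT^{(2)^*}=(t_{1,2}\ot I_{\mathcal H})\wT^*$, one checks that these extend the partial isometries you need. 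The resulting pair is the BCL model, whose product is $\widetilde S$ by the analogue of the BCL identity $(P+zP^\perp)U^*U(P^\perp+zP)=z$. In the example no padding is needed: $W=I$ is admissible, $U$ becomes the flip, and $\Phi_1(z)=\Phi_2(z)\colon(c,d)\mapsto(zd,c)$, whose product is $z$.

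Two smaller points. First, for $U_1$ to be unitary, $U$ must map onto $(E_2\ot E_1\ot P\mathcal K)\oplus P^\perp\mathcal K$, which contains $\mathcal L$, not $E_2\ot\mathcal L$. Second, once the codomain is corrected, $\ell^2\ot(E_2\ot\mathcal K_0)$ and $\ell^2\ot\mathcal K_0$ need not be equivalent representations of $\mathcal B$. Your multiplicity argument therefore needs a padding that absorbs both $E_1\ot-$ and $E_2\ot-$, for example infinite multiplicity of $\bigoplus_{m\in\mathbb N_0^2}\mathbb E(m)\ot(\mathcal D_{T^{(1)}}\oplus\mathcal D_{T^{(2)}})$. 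With that padding, an Eilenberg swindle produces $W$.
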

\begin{proof}
	Let $ \dim (\mathcal{D}_{T^{(1)}} \oplus  (E_1 \ot\mathcal{D}_{T^{(2)}}))$ or $\dim ((E_2 \ot \mathcal{D}_{T^{(1)}}) \oplus  \mathcal{D}_{T^{(2)}} )$ is infinite dimensional space, and let $\mathcal{D}$ be an infinite dimensional
	Hilbert space. Set $\mathcal K= \mathcal{D} \oplus \mathcal{D}_{T^{(2)}} \oplus(E_2 \ot\mathcal{D}_{T^{(1)}}),$ we define an isometry $V:\mathcal{D}_{T}\to  \mathcal K$ by
	$$V(\Delta(T)h):=(0,\Delta(T^{(2)})h,  (I_{E_2} \ot \Delta(T^{(1)}))\wT^{(2)^*}h)\quad \quad h\in \mathcal{H}.$$
	Let $j_1 : \mathcal{D}\oplus \mathcal{D}_{T^{(2)}} \raro \mathcal{K}$ and $j_2 :  E_2 \ot\mathcal{D}_{ T^{(1)}} \raro \mathcal{K}$ be
	the inclusion maps, defined by
	\[
	j_1(h,h_1) := (h,h_1,0)\quad  \mbox{and} \quad j_2 (h_2) := (0,0,h_2)
	\quad (h\in \mathcal{D},h_1 \in \mathcal{D}_{T^{(2)}}, h_2 \in  E_2 \ot\mathcal{D}_{ T^{(1)}}).
	\]
	Then $P= j_1 j_1^* \in B(\mathcal{K})$ is an orthogonal
	projection of $\mathcal{K}$ onto $\mathcal{D}\oplus \mathcal{D}_{ T^{(2)}}$, i.e.,
	\[
	P(h_1,h_2,h_3) = (h_1,h_2,0) \quad \quad \quad (h_1,h_2,h_3) \in \mathcal{K}.
	\]
	Thus, $j_2 j_2^* = P^\perp$ is the orthogonal projection of $\mathcal{K}$
	onto $E_2 \ot\mathcal{D}_{ T^{(1)}}.$
	Define an isometry
	 \begin{align*}&
		U_{\mathcal{D}}: \{0_\mathcal{D}\}\oplus \{(I_{E_2} \ot \Delta(T^{(2)}))\wT^{(2)^*}h\oplus (I_{E_2^{\ot 2}} \ot \Delta(T^{(1)}))\wT_2^{(2)^*}h:~h\in \mathcal H\} \to \\& \:\:\:\:\:\:\:\: \{0_\mathcal{D}\}\oplus \{(t_{1,2}\ot \Delta(T^{(2)}))\wT^{*}h\oplus (I_{E_2}\ot \Delta(T^{(1)}))\wT^{(2)^*}h:~h\in \mathcal H\} 
	\end{align*} by 
	\begin{align*}&
		U_{\mathcal{D}}(0_\mathcal{D},(I_{E_2}\ot \Delta(T^{(2)}))\wT^{(2)^*}h,(I_{E_2^{\ot 2}} \ot \Delta(T^{(1)}))\wT_2^{(2)^*}h)\\&=(0_\mathcal{D},(t_{1,2}\ot \Delta(T^{(2)}))\wT^{*}h,(I_{E_2}\ot \Delta(T^{(1)}))\wT^{(2)^*}h).\end{align*}  $h\in \mathcal H.$ 
	It is easy to observe that $U_{\mathcal{D}}$ can be extended to a unitary operator from
	$\mathcal{D} \oplus E_2\ot \mathcal{D}_{ T^{(1)}}\oplus E_2^{\ot 2}\ot \mathcal{D}_{ T^{(1)}}$  to
	$\mathcal{D} \oplus E_2\ot E_1 \ot \mathcal{D}_{ T^{(2)}}\oplus E_2\ot \mathcal{D}_{ T^{(1)}}.$ Using above notations we define a unitary module map
	\[
	U_1=
	\begin{bmatrix}
		U_{\mathcal{D}}^*P^{\perp}& U_{\mathcal{D}}^*(t_{1,2}\ot j_1)\\
		j_1^*& 0
	\end{bmatrix}: \mathcal{K} \oplus  (E \ot (\mathcal{D} \oplus \mathcal{D}_{ T^{(2)}}))\to (E_2 \ot \mathcal{K}) \oplus  (\mathcal{D} \oplus \mathcal{D}_{ T^{(2)}}).
	\] such that 
		\begin{align*}
		U_1(V\Delta(T)h,(0,(I_{E}\ot \Delta(T^{(2)}))\wT^{*}h))=((I_{E_2}\ot V\Delta(T))\wT^{(2)^*}h,(0,\Delta(T^{(2)})h))
	\end{align*} for $h\in \mathcal{H}.$
From Theorem \ref{identity}, there exists a contractive linear map $\widetilde{\Phi}_2:E_2 \ot \mathcal{K}\to \mathcal{F}(E)\ot \mathcal{K}$ such that $$(I_{E_2}\ot \Pi_V)\wT^{(2)^*}=\widetilde{M}_{{\Phi}_2}^*\Pi_V$$ where $\widetilde{\Phi}_2=P^{\perp}U+(I_{E}\ot j_1)(t_{2,1}\otimes j_1^*)U=(P^{\perp} + \widetilde{S}(t_{2,1}\otimes P))U.$

Similarly, we get the relation for $\widetilde{\Phi}_1.$
	
\end{proof}

\section{Factorizations of completely contractive covariant representations}
Suppose that $(\sigma, T^{(1)}, T^{(2)})$ is a c.c.c. representation of $\be$ over $\mathbb N^2_0$ on a
Hilbert space $\mathcal H$ and $(\sigma, T)$ is a pure c.c.c. representation  of $E:=E_1\ot E_2$ on $\mathcal H,$ where $\widetilde{T}=\widetilde{T}^{(1)}(I_{E_1} \otimes \widetilde{T}^{(2)}).$ Then, by Theorem \ref{dilation}, we can observe $\wT\cong P_{\mathcal{Q}}\widetilde{S}|_{E\ot \mathcal{Q}},$ where $\mathcal{Q}=ran\Pi,$ $\Pi: \mathcal{H} \longrightarrow \mathcal{F}(E)\otimes\mathcal{D}_{T}$ is the isometric dilation of $(\sigma,T)$ and $(\rho,S)$ is the induced representation of $E$ on $\mathcal{F}(E)\otimes\mathcal{D}_{T}.$

Suppose that $\Pi_V=(I_{\mathcal{F}(E)}\ot V)\Pi: \mathcal{H} \longrightarrow \mathcal{F}(E)\otimes\mathcal{K}$ is the isometric dilation as in Theorems \ref{JJ3} and \ref{DSAR4}, that is, $$ \Pi_V \sigma(b)=\rho(b)\Pi_V    \quad and \quad (I_{E_1}\ot \Pi_V)\wT^{(1)^*}=\widetilde{M}_{{\Phi}_1}^*\Pi_V \quad and \quad (I_{E_2}\ot \Pi_V)\wT^{(2)^*}=\widetilde{M}_{{\Phi}_2}^*\Pi_V.$$ From Theorem \ref{JJ3}, we get $$(I_{E_1}\ot \Pi)\wT^{(1)^*}=(I_{E_1}\ot I_{\mathcal{F}(E)}\ot V^*)\widetilde{M}^*_{{\Phi}_1}(I_{\mathcal{F}(E)}\ot V)\Pi=\widetilde{M}_{\phi_1}^*\Pi,$$ where $\widetilde{\phi_1}=(I_{\mathcal{F}(E)}\ot V^*)\widetilde{\Phi}_1(I_{E_1}\ot V)$ and $V:\mathcal{D}_{T}\to  \mathcal K$ is the isometric module map. Similarly, $$(I_{E_2}\ot \Pi)\wT^{(2)^*}=(I_{E_2}\ot I_{\mathcal{F}(E)}\ot V^*)\widetilde{M}^*_{{\Phi}_2}(I_{\mathcal{F}(E)}\ot V)\Pi=\widetilde{M}_{\phi_2}^*\Pi,$$ where $\widetilde{\phi_2}=(I_{\mathcal{F}(E)}\ot V^*)\widetilde{{\Phi}}_2(I_{E_2}\ot V)$. In particular, $ran \Pi$ is co-invariant for $(\rho,M_{\phi_1},M_{\phi_2})$ of $\be$ on a
Hilbert space $\mathcal{F}(E)\ot \mathcal{D}_{T}.$ It follows that 
$$(I_{E} \ot \Pi)\widetilde{T}^{*}=\widetilde{S}^{*}\Pi,$$ and $ran \Pi$ is also a co-invariant for $(\rho,S)$ of $E$ on $\mathcal{F}(E)\ot \mathcal{D}_{T}.$

\begin{theorem}
	Let $(\sigma, T)$ be a pure c.c.c. representation  of $E$ on $\mathcal H$ and let $\wT\cong P_{\mathcal{Q}}\widetilde{S}|_{E\ot \mathcal{Q}}$ be the isometric dilation for $(\sigma, T)$ $($see Theorem \ref{dilation}$)$. If $\widetilde{T}=\widetilde{T}^{(1)}(I_{E_1} \otimes \widetilde{T}^{(2)})$ for some c.c.c. representation $(\sigma, T^{(1)}, T^{(2)})$ of $\be$ on $\mathcal H,$ then there exist completely bounded maps $\phi_1$ and $\phi_2$ such that $\mathcal{Q}$ is  co-invariant for $(\rho,M_{\phi_1},M_{\phi_2})$ of $\be$ on a
	Hilbert space $\mathcal{F}(E)\ot \mathcal{D}_{T}$ and $$(\sigma,\wT^{(1)},\wT^{(2)})\cong (\rho ,P_{\mathcal{Q}}\widetilde{M}_{\phi_1}|_{E_1\ot \mathcal{Q}},P_{\mathcal{Q}}\widetilde{M}_{\phi_2}|_{E_2\ot \mathcal{Q}}).$$ 
\end{theorem}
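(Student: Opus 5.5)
We argue essentially as in the discussion preceding the statement. Since $(\sigma,T)$ is pure, Theorem \ref{dilation} gives the isometric Poisson kernel $\Pi:\mathcal{H}\to\mathcal{F}(E)\ot\mathcal{D}_T$ with $(I_E\ot\Pi)\wT^*=\widetilde{S}^*\Pi$ and $\Pi\sigma(a)=\rho(a)\Pi$. We set $\mathcal{Q}=ran\,\Pi$ and identify $\mathcal{H}$ with $\mathcal{Q}$ via $\Pi$.

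\textbf{Step 1: enlarge the defect space.} Next we use the factorization $\wT=\wT^{(1)}(I_{E_1}\ot\wT^{(2)})$. Equation (\ref{BB1}) gives an isometric module map $V:\mathcal{D}_T\to\mathcal{K}$, where $\mathcal{K}=\mathcal{D}_{T^{(2)}}\oplus(E_2\ot\mathcal{D}_{T^{(1)}})$. In the infinite-dimensional case we instead take $\mathcal{K}=\mathcal{D}\oplus\mathcal{D}_{T^{(2)}}\oplus(E_2\ot\mathcal{D}_{T^{(1)}})$ with an auxiliary infinite-dimensional $\mathcal{D}$. We then build the unitary module maps $U_1,U_2$ exactly as in the proofs of Theorems \ref{JJ3} and \ref{DSAR4}. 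Theorem \ref{identity} then produces contractive maps $\widetilde{\Phi}_i$ with
\[
(I_{E_i}\ot\Pi_V)\wT^{(i)*}=\widetilde{M}_{\Phi_i}^*\Pi_V,\qquad \Pi_V=(I_{\mathcal{F}(E)}\ot V)\Pi .
\]

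\textbf{Step 2: compress back to $\mathcal{D}_T$.} We define
\[
\widetilde{\phi}_i=(I_{\mathcal{F}(E)}\ot V^*)\widetilde{\Phi}_i(I_{E_i}\ot V),
\]
so that $\widetilde{M}_{\phi_i}=(I_{\mathcal{F}(E)}\ot V^*)\widetilde{M}_{\Phi_i}(I_{E_i}\ot I_{\mathcal{F}(E)}\ot V)$. These maps are contractive, and hence completely bounded bimodule maps, because $V$ is a module map.

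The key check is
\[
(I_{E_i}\ot\Pi)\wT^{(i)*}=\widetilde{M}_{\phi_i}^*\Pi .
\]
To obtain it we apply $I_{E_i}\ot I_{\mathcal{F}(E)}\ot V^*$ to the identity from Step 1 and use $V^*V=I$. This gives
\[
(I_{E_i}\ot\Pi)\wT^{(i)*}=(I\ot V^*)\widetilde{M}_{\Phi_i}^*(I\ot V)\Pi .
\]
The right-hand side equals $\widetilde{M}_{\phi_i}^*\Pi$ provided the compression of $\widetilde{M}_{\Phi_i}^*$ to $\mathcal{F}(E)\ot V\mathcal{D}_T$ behaves correctly on $ran\,\Pi_V$. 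This is where care is needed: $M_{\phi_i}$ is defined through $\widetilde{\phi}_i$ rather than as a literal compression of $M_{\Phi_i}$. We therefore check, via the explicit formula for $M_\Theta$ on $\widetilde{S}_n(x_n)h$ together with $t^{(1,n)}_{2,1}$, that $(I\ot V^*)M_{\Phi_i}(\eta)(I\ot V)=M_{\phi_i}(\eta)$ when evaluated against vectors of the form $(I\ot V)\Pi h$. This is the step I expect to be the main obstacle. I would attack it by pairing against elementary tensors $\eta_n$, as in the proof of Theorem \ref{identity}.

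\textbf{Step 3: conclude.} From Step 2, $\widetilde{M}_{\phi_i}^*\mathcal{Q}\subseteq E_i\ot\mathcal{Q}$, and $\rho(\mathcal{B})\mathcal{Q}\subseteq\mathcal{Q}$ because $\Pi$ intertwines $\sigma$ and $\rho$. Hence $\mathcal{Q}$ is co-invariant for $(\rho,M_{\phi_1},M_{\phi_2})$.

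Taking adjoints gives $\Pi\wT^{(i)}=P_{\mathcal{Q}}\widetilde{M}_{\phi_i}(I_{E_i}\ot\Pi)$. So the unitary $\Pi:\mathcal{H}\to\mathcal{Q}$ implements the isomorphism
\[
(\sigma,\wT^{(1)},\wT^{(2)})\cong\bigl(\rho|_{\mathcal{Q}},\,P_{\mathcal{Q}}\widetilde{M}_{\phi_1}|_{E_1\ot\mathcal{Q}},\,P_{\mathcal{Q}}\widetilde{M}_{\phi_2}|_{E_2\ot\mathcal{Q}}\bigr).
\]
It is compatible with $\wT\cong P_{\mathcal{Q}}\widetilde{S}|_{E\ot\mathcal{Q}}$, because $(I_E\ot\Pi)\wT^*=\widetilde{S}^*\Pi$.
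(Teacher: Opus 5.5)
Your proposal follows the paper's argument: dilate the product $T$, obtain $\widetilde\Phi_i$ from Theorem~\ref{identity} via the constructions of Theorems~\ref{JJ3} and~\ref{DSAR4}, set $\widetilde\phi_i=(I_{\mathcal F(E)}\ot V^*)\widetilde\Phi_i(I_{E_i}\ot V)$, and read off co-invariance of $\mathcal Q$ and the isomorphism from $(I_{E_i}\ot\Pi)\wT^{(i)*}=\widetilde M_{\phi_i}^*\Pi$; in those theorems $\mathcal K$ and $V$ actually differ for $i=1,2$, but this is harmless because each $\widetilde\Phi_i$ is compressed back to $\mathcal D_T$ separately against the same $\Pi$. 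The step you flag as the main obstacle is immediate (the paper just asserts it): since $V$ and $V^*$ are module maps, $I_{\mathcal F(E)}\ot V$ and $I_{\mathcal F(E)}\ot V^*$ pass through each $\widetilde S_n$ and each $t\ot I$, so $\widetilde M_{\phi_i}=(I_{\mathcal F(E)}\ot V^*)\widetilde M_{\Phi_i}(I_{E_i}\ot I_{\mathcal F(E)}\ot V)$ holds on every elementary tensor $\eta\ot x_n\ot k$ with $\eta\in E_i$, $x_n\in E^{\ot n}$, $k\in\mathcal D_T$; it is therefore an operator identity, not something that needs checking only on $\mathrm{ran}\,\Pi_V$.
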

\begin{corollary}
		Let $(\sigma, T)$ be a pure c.c.c. representation  of $E$ on $\mathcal H$ and let $\wT\cong P_{\mathcal{Q}}\widetilde{S}|_{E\ot \mathcal{Q}}$ be the isometric dilation for $(\sigma, T)$ $($see Theorem \ref{dilation}$)$. Then $\widetilde{T}=\widetilde{T}^{(1)}(I_{E_1} \otimes \widetilde{T}^{(2)})$ for some c.c.c. representation $(\sigma, T^{(1)}, T^{(2)})$ of $\be$ on $\mathcal H$ if and only if there exist completely bounded linear maps $\phi_1$ and $\phi_2$ such that $\mathcal{Q}$ is  co-invariant for $(\rho,M_{\phi_1},M_{\phi_2})$ of $\be$ on $\mathcal{F}(E)\ot \mathcal{D}_{T},$ $$P_{\mathcal{Q}}\widetilde{S}|_{E\ot \mathcal{Q}}=P_{\mathcal{Q}}\widetilde{M}_{\phi_1}(I_{E_1}\ot \widetilde{M}_{\phi_2})|_{E\ot \mathcal{Q}}=P_{\mathcal{Q}}\widetilde{M}_{\phi_2}(I_{E_2}\ot \widetilde{M}_{\phi_1})(t_{1,2}\ot I_{\mathcal{F}(E)\ot \mathcal{D}_{T}})|_{E\ot \mathcal{Q}}.$$and $$(\sigma,\wT^{(1)},\wT^{(2)})\cong (\rho ,P_{\mathcal{Q}}\widetilde{M}_{\phi_1}|_{E_1\ot \mathcal{Q}},P_{\mathcal{Q}}\widetilde{M}_{\phi_2}|_{E_2\ot \mathcal{Q}}).$$
\end{corollary}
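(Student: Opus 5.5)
The corollary is an ``if and only if'', and I would prove the two directions separately.

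\emph{Forward direction.} Assume $\widetilde{T}=\widetilde{T}^{(1)}(I_{E_1}\ot\widetilde{T}^{(2)})$ for a c.c.c. representation $(\sigma,T^{(1)},T^{(2)})$ of $\be$. The preceding theorem then supplies completely bounded $\phi_1,\phi_2$ with $\mathcal{Q}$ co-invariant for $(\rho,M_{\phi_1},M_{\phi_2})$ and
\[
(\sigma,\wT^{(1)},\wT^{(2)})\cong(\rho,P_{\mathcal{Q}}\widetilde{M}_{\phi_1}|_{E_1\ot\mathcal{Q}},P_{\mathcal{Q}}\widetilde{M}_{\phi_2}|_{E_2\ot\mathcal{Q}}).
\]
It remains to check the two factorization identities for $P_{\mathcal{Q}}\widetilde{S}|_{E\ot\mathcal{Q}}$. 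The key point is that co-invariance gives $\widetilde{M}_{\phi_i}^*(\mathcal{Q})\subseteq E_i\ot\mathcal{Q}$. Hence
\[
P_{\mathcal{Q}}\widetilde{M}_{\phi_1}(I_{E_1}\ot\widetilde{M}_{\phi_2})|_{E\ot\mathcal{Q}}
=P_{\mathcal{Q}}\widetilde{M}_{\phi_1}|_{E_1\ot\mathcal{Q}}\,\bigl(I_{E_1}\ot P_{\mathcal{Q}}\widetilde{M}_{\phi_2}|_{E_2\ot\mathcal{Q}}\bigr),
\]
which one sees by taking adjoints: $(I_{E_1}\ot\widetilde{M}_{\phi_2}^*)\widetilde{M}_{\phi_1}^*$ maps $\mathcal{Q}$ into $E\ot\mathcal{Q}$. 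Transporting through the unitary $\Pi$ identifies the right-hand side with $\Pi\wT^{(1)}(I_{E_1}\ot\wT^{(2)})(I_E\ot\Pi^*)=\Pi\wT(I_E\ot\Pi^*)$. By Theorem~\ref{dilation}, $(I_E\ot\Pi)\wT^*=\widetilde{S}^*\Pi$, so this equals $P_{\mathcal{Q}}\widetilde{S}|_{E\ot\mathcal{Q}}$. The second identity follows the same way from the commutation relation (\ref{rep}) $\wT^{(1)}(I_{E_1}\ot\wT^{(2)})=\wT^{(2)}(I_{E_2}\ot\wT^{(1)})(t_{1,2}\ot I_{\mathcal H})$. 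The $t_{1,2}$ has to be carried through $\Pi$; this is legitimate because $t_{1,2}\ot I$ commutes with $I\ot\Pi$.

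\emph{Converse.} Assume $\phi_1,\phi_2$ are given with $\mathcal{Q}$ co-invariant and both identities holding. Define $\wT^{(i)}$ on $\mathcal H$ by pulling $P_{\mathcal{Q}}\widetilde{M}_{\phi_i}|_{E_i\ot\mathcal{Q}}$ back through $\Pi$:
\[
\wT^{(i)}:=\Pi^*\widetilde{M}_{\phi_i}(I_{E_i}\ot\Pi).
\]
Co-invariance is what makes these compress multiplicatively, exactly as in the forward step. Then:
\begin{itemize}
\item The module property $\wT^{(i)}(\phi_i(a)\ot I)=\sigma(a)\wT^{(i)}$ follows from $\Pi\sigma(a)=\rho(a)\Pi$, since $\mathcal{Q}$ is $\rho$-invariant.
\item The first identity, pulled back, gives $\wT^{(1)}(I_{E_1}\ot\wT^{(2)})=\wT$.
\item Equating the two identities gives relation (\ref{rep}).
\end{itemize}

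The main obstacle is contractivity. The $\phi_i$ are only assumed completely bounded, while a c.c.c. representation needs $\|\wT^{(i)}\|\le 1$. I would first try to read the hypothesis ``$(\rho,M_{\phi_1},M_{\phi_2})$ is a representation of $\be$'' as already including that each $(\rho,M_{\phi_i})$ is completely contractive, which is how the paper uses the word representation. Then the compressions are contractive, and Lemma~\ref{MSC} gives the c.c.c. representations $(\sigma,T^{(i)})$. If that reading is not available, the statement should be adjusted to assume $\|\widetilde{M}_{\phi_i}\|\le 1$, which is what the construction in Theorems~\ref{JJ3} and~\ref{DSAR4} actually produces.
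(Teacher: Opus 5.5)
Your proposal is correct and matches the argument the paper leaves implicit; the paper gives no separate proof of this corollary. The forward direction comes from the preceding theorem, i.e.\ from $(I_{E_i}\ot\Pi)\wT^{(i)^*}=\widetilde{M}_{\phi_i}^*\Pi$, with co-invariance making the compressions multiply, and the converse comes from compressing back through $\Pi$. Your contractivity caveat is well founded, and your first reading is the right one: the paper only defines $M_\Theta$ for completely contractive bimodule maps $\Theta$, and the maps $\widetilde{\phi}_i=(I\ot V^*)\widetilde{\Phi}_i(I_{E_i}\ot V)$ that it actually produces are contractions.
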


\section{Commuting pure contractions}

In this section, we study completely contractive covariant representations $(\sigma, T^{(1)}, T^{(2)})$ of $\be$ over $\mathbb N^2_0$ on a Hilbert space $\mathcal{H}$ that admit a canonical BCL triple associated with them. 

Let $(\sigma, T^{(1)}, T^{(2)})$ be a completely contractive covariant representation of $\be$ on $\mathcal H.$ From Equation (\ref{BB1}), we define an isometric module map $$U:\{(I_{E_2} \ot \Delta(T^{(1)}))\wT^{(2)^*}h\oplus \Delta(T^{(2)})h: h\in \mathcal{H}\} \to \{\Delta(T^{(1)})h\oplus (I_{E_1} \ot \Delta(T^{(2)}))\wT^{(1)^*}h: h\in \mathcal{H}\}$$ by
\begin{align}\label{JJ4}
	U((I_{E_2} \ot \Delta(T^{(1)}))\wT^{(2)^*}h, \Delta(T^{(2)})h)=(\Delta(T^{(1)})h,  (I_{E_1} \ot \Delta(T^{(2)}))\wT^{(1)^*}h) \quad h\in \mathcal H. \end{align} Let us now introduce some subspaces
$$\mathcal{M}_{U}:=\{(I_{E_2} \ot \Delta(T^{(1)}))\wT^{(2)^*}h\oplus \Delta(T^{(2)})h: h\in \mathcal{H}\}\subseteq E_2\ot \mathcal{D}_{ T^{(1)}}\oplus \mathcal{D}_{ T^{(2)}},$$
$$\mathcal{N}_{U}:=\{ \Delta(T^{(1)})h \oplus (I_{E_1} \ot \Delta(T^{(2)}))\wT^{(1)^*}h : h\in \mathcal{H}\}\subseteq \mathcal{D}_{ T^{(1)}}\oplus E_1\ot \mathcal{D}_{ T^{(2)}},$$
Assume $P$ to be an orthogonal
projection of $\mathcal{D}_{ T^{(2)}} \oplus (E_2\ot \mathcal{D}_{ T^{(1)}})$ onto $ \mathcal{D}_{ T^{(2)}}$, i.e.,
\[
P(h_1,h_2) = (h_1,0) \quad \quad \quad (h_1,h_2) \in \mathcal{D}_{ T^{(2)}} \oplus (E_2\ot \mathcal{D}_{ T^{(1)}}).
\]
If we now assume that $\mbox{dim~} ((\mathcal{D}_{T^{(1)}} \oplus (E_1 \ot\mathcal{D}_{ T^{(2)}}))\ominus \mathcal{N}_{U})=\mbox{dim~} (((E_2\ot \mathcal{D}_{ T^{(1)}})\oplus \mathcal{D}_{ T^{(2)}})\ominus \mathcal{M}_{U}),$ then $U$ extends to a unitary (again denoted by $U$) from $E_2\ot \mathcal{D}_{ T^{(1)}}\oplus \mathcal{D}_{ T^{(2)}}$ to $\mathcal{D}_{ T^{(1)}}\oplus E_1\ot \mathcal{D}_{ T^{(2)}}.$ This discussion allows us to identify a canonical BCL triple associated to $(\sigma,T^{(1)}, T^{(2)}).$

\begin{definition}
	Let $(\sigma, T^{(1)}, T^{(2)})$ be a c.c.c. representation of $\be$ on $\mathcal H.$ Then a BCL triple $(\mathcal{D}_{ T^{(2)}} \oplus (E_2\ot \mathcal{D}_{ T^{(1)}}),U,P)$
	 is said to be complete for $(\sigma, T^{(1)}, T^{(2)})$ if $\mbox{dim~} ((\mathcal{D}_{T^{(1)}} \oplus (E_1 \ot\mathcal{D}_{ T^{(2)}}))\ominus \mathcal{N}_{U})=\mbox{dim~} (((E_2\ot \mathcal{D}_{ T^{(1)}})\oplus \mathcal{D}_{ T^{(2)}})\ominus \mathcal{M}_{U}).$
\end{definition}
We have established in our discussion that the assumption: $(\mathcal{D}_{ T^{(1)}}\oplus E_1\ot \mathcal{D}_{ T^{(2)}},U,P)$ is a complete BCL triple for $(\sigma, T^{(1)}, T^{(2)})$ is equivalent to showing that there exists a unitary,
\begin{equation}\label{Dimple1}
 U = \begin{bmatrix}\widetilde{A}&B\\C&D\end{bmatrix}:
	(E_2 \ot\mathcal{D}_{ T^{(1)}})\oplus \mathcal{D}_{ T^{(2)}} \to \mathcal{D}_{ T^{(1)}}\oplus (E_1 \ot \mathcal{D}_{ T^{(2)}}),
\end{equation} where $(\sigma_1,A)$ is a completely contractive covariant representation of $E_2$ on $\mathcal{D}_{ T^{(1)}}$ define by $A(\xi)h=\widetilde{A}(\xi\ot h)$ for $\xi\in E_2, h\in \mathcal{D}_{ T^{(1)}}, \sigma_1=\sigma|_{\mathcal{D}_{ T^{(1)}}},$ $ B:\mathcal{D}_{ T^{(2)}}\to \mathcal{D}_{ T^{(1)}}, C:E_2\ot \mathcal{D}_{ T^{(1)}}\to E_1\ot \mathcal{D}_{ T^{(2)}}$ and $D:\mathcal{D}_{ T^{(2)}}\to E_1\ot \mathcal{D}_{ T^{(2)}}$ (see \cite{ARDS1}). For every $m\in \mathbb{N},$ define 
\begin{align*}
D^{(m)}= (I_{E_1^{\otimes m-1}}\ot D)\dots(I_{E_1}\ot D)D :\mathcal{D}_{ T^{(2)}}\to E_1^{\ot m} \ot \mathcal{D}_{ T^{(2)}}.
\end{align*} Observe that $D^{(m+n)}=(I_{E_1^{\ot m}}\ot D^{(n)})D^{(m)}$ for all $m,n\in \mathbb{N}.$  The following identities based on Equation \ref{JJ4} are satisfied for all $h\in \mathcal{H},$
\begin{equation}\label{JJ5} 
\Delta(T^{(1)})h= \widetilde{A}(I_{E_2} \ot \Delta(T^{(1)}))\wT^{(2)^*}h+
	B\Delta(T^{(2)})h,
\end{equation}
and \begin{equation}\label{JJ6} 
(I_{E_1}\ot \Delta(T^{(2)}))\wT^{(1)^*}h= C(I_{E_2} \ot \Delta(T^{(1)}))\wT^{(2)^*}h+
D\Delta(T^{(2)})h.
\end{equation} Note that with respect to the orthogonal projections $P,Q$ defined as above in Theorem \ref{JJ3}, we have
	\begin{equation}
		U = \begin{bmatrix}\widetilde{A}&B\\C&D\end{bmatrix}=\begin{bmatrix}QUP^{\perp}|_{ran{P^{\perp}}}& QUP|_{ran{P}}\\Q^{\perp}U P^{\perp}|_{ran{ P^{\perp}}}&Q^{\perp}UP|_{ran{P}}\end{bmatrix},
	\end{equation}
Since $U^*U = I = UU^*,$ we get
\begin{align*}
	\begin{bmatrix}\widetilde{A}^*\widetilde{A}+C^*C&\widetilde{A}^*B+C^*D\\B^*\widetilde{A}+D^*C&B^*B+D^*D\end{bmatrix}=\begin{bmatrix}I&0\\0&I\end{bmatrix}=\begin{bmatrix}\widetilde{A}\widetilde{A}^*+BB^*&\widetilde{A}C^*+BD^*\\C\widetilde{A}^*+DB^*&CC^*+DD^*\end{bmatrix}.
\end{align*}
If we act by $\widetilde{A}^*$ on the left of Equation (\ref{JJ5}), we have
\begin{align*}
	\widetilde{A}^*\Delta(T^{(1)})h&= \widetilde{A}^*\widetilde{A}(I_{E_2} \ot \Delta(T^{(1)}))\wT^{(2)^*}h+
	\widetilde{A}^*B\Delta(T^{(2)})h\\&=(I-C^*C)(I_{E_2} \ot \Delta(T^{(1)}))\wT^{(2)^*}h-C^*D\Delta(T^{(2)})h\\&=(I_{E_2} \ot \Delta(T^{(1)}))\wT^{(2)^*}h-C^*(C(I_{E_2} \ot \Delta(T^{(1)}))\wT^{(2)^*}h+
	D\Delta(T^{(2)})h)\\&=(I_{E_2} \ot \Delta(T^{(1)}))\wT^{(2)^*}h-C^*(I_{E_1}\ot \Delta(T^{(2)}))\wT^{(1)^*}h.
\end{align*} Similarly, if we act by $D^*$ on the left of Equation (\ref{JJ6}), we get
\begin{align*}
	D^*(I_{E_1}\ot \Delta(T^{(2)}))\wT^{(1)^*}h&= D^*C(I_{E_2} \ot \Delta(T^{(1)}))\wT^{(2)^*}h+
	D^*D\Delta(T^{(2)})h \\&=-B^*\widetilde{A}(I_{E_2} \ot \Delta(T^{(1)}))\wT^{(2)^*}h+(I-B^*B)\Delta(T^{(2)})h\\&=\Delta(T^{(2)})h-B^*(\widetilde{A}(I_{E_2} \ot \Delta(T^{(1)}))\wT^{(2)^*}h+
	B\Delta(T^{(2)})h)\\&=\Delta(T^{(2)})h-B^*\Delta(T^{(1)})h.
\end{align*} Therefore, we obtain
\begin{equation}\label{JJ7}
	\Delta(T^{(2)})h=D^*(I_{E_1}\ot \Delta(T^{(2)}))\wT^{(1)^*}h+B^*\Delta(T^{(1)})h
\end{equation}
and 
\begin{equation}\label{JJ8}
	(I_{E_2} \ot \Delta(T^{(1)}))\wT^{(2)^*}h=	\widetilde{A}^*\Delta(T^{(1)})h+C^*(I_{E_1}\ot \Delta(T^{(2)}))\wT^{(1)^*}h.
\end{equation}

We now derive a recurrence relation connecting the entries of $U$ and $(\sigma, T^{(1)}, T^{(2)}).$

\begin{lemma}\label{JJ9}
	For all $m\in\mathbb{N}$ and $h\in \mathcal{H},$ we have
	\begin{enumerate}
	\item $\widetilde{A}_m^*\Delta(T^{(1)})h = (I_{E_2^{\ot m}}\ot \Delta(T^{(1)}))\wT_m^{(2)^*}h - \sum_{k=0}^{m-1} (I_{E_2^{\ot m-1-k}}\ot (I_{E_2}\ot \widetilde{A}_k^*)C^*(I_{E_1}\ot \Delta(T^{(2)})) \\\wT^{(1)^*})\wT_{m-1-k}^{(2)^*}h;$
	
	\item $D^{(m)}\Delta(T^{(2)})h=(I_{E_1^{\ot m}}\ot \Delta(T^{(2)}))\wT_m^{(1)^*}h - \sum_{k=0}^{m-1} (I_{E_1^{\ot m-1-k}}\ot (I_{E_1}\ot D^{(k)})C(I_{E_2}\ot \Delta(T^{(1)}))\wT^{(2)^*})\wT_{m-1-k}^{(1)^*}h.$
	\end{enumerate}
\end{lemma}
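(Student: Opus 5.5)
Both identities will be proved by induction on $m$, using the one-step relations \eqref{JJ8} and \eqref{JJ6} as the base case and the recursive structure $\widetilde{A}_m^*=(I_{E_2}\ot\widetilde{A}_{m-1}^*)\widetilde{A}^*$ and $D^{(m)}=(I_{E_1}\ot D^{(m-1)})D$ for the inductive step. Here $\widetilde{A}_m:E_2^{\ot m}\ot\mathcal{D}_{T^{(1)}}\to\mathcal{D}_{T^{(1)}}$ is the $m$-fold map of the c.c.c. representation $(\sigma_1,A)$, so $\widetilde{A}_m=\widetilde{A}(I_{E_2}\ot\widetilde{A}_{m-1})$. The factorization $D^{(m)}=(I_{E_1}\ot D^{(m-1)})D$ follows from $D^{(m+n)}=(I_{E_1^{\ot m}}\ot D^{(n)})D^{(m)}$. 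For $m=1$, (1) reads $\widetilde{A}^*\Delta(T^{(1)})h=(I_{E_2}\ot\Delta(T^{(1)}))\wT^{(2)^*}h-C^*(I_{E_1}\ot\Delta(T^{(2)}))\wT^{(1)^*}h$ with $\widetilde{A}_0^*=I$, which is exactly \eqref{JJ8} rearranged. Likewise (2) for $m=1$ is \eqref{JJ6} rearranged, with $D^{(0)}=I$.

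For the inductive step in (1), I write $\widetilde{A}_{m+1}^*\Delta(T^{(1)})h=(I_{E_2}\ot\widetilde{A}_m^*)\widetilde{A}^*\Delta(T^{(1)})h$ and substitute the base identity. This gives two pieces:
\[
(I_{E_2}\ot\widetilde{A}_m^*)(I_{E_2}\ot\Delta(T^{(1)}))\wT^{(2)^*}h \quad\text{and}\quad -(I_{E_2}\ot\widetilde{A}_m^*)C^*(I_{E_1}\ot\Delta(T^{(2)}))\wT^{(1)^*}h .
\]
The second piece is precisely the $k=m$ term of the desired sum (with $E_2^{\ot 0}$ factor trivial and $\wT_0^{(2)^*}=I$). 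For the first piece, I apply the induction hypothesis inside $I_{E_2}\ot(\cdot)$ at the vector level. Concretely, I check the identity on elementary tensors $\xi\ot\Delta(T^{(1)})g$ after pairing with arbitrary vectors, and then use $\wT_{m+1}^{(2)^*}=(I_{E_2}\ot\wT_m^{(2)^*})\wT^{(2)^*}$ and $(I_{E_2}\ot\wT_{m-1-k}^{(2)^*})\wT^{(2)^*}=\wT_{m-k}^{(2)^*}$. This turns $I_{E_2}\ot(\text{RHS}_m)$ applied to $\wT^{(2)^*}h$ into $(I_{E_2^{\ot m+1}}\ot\Delta(T^{(1)}))\wT_{m+1}^{(2)^*}h$ minus the terms $k=0,\dots,m-1$ with the exponent $m-1-k$ shifted to $m-k$. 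Together with the $k=m$ term, this is the formula for $m+1$.

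Part (2) is handled symmetrically. I write $D^{(m+1)}\Delta(T^{(2)})h=(I_{E_1}\ot D^{(m)})D\Delta(T^{(2)})h$ and substitute $D\Delta(T^{(2)})h=(I_{E_1}\ot\Delta(T^{(2)}))\wT^{(1)^*}h-C(I_{E_2}\ot\Delta(T^{(1)}))\wT^{(2)^*}h$ from \eqref{JJ6}. The $C$-term gives the new $k=m$ summand. The other term, after applying the induction hypothesis tensored with $I_{E_1}$ and using $\wT_{m+1}^{(1)^*}=(I_{E_1}\ot\wT_m^{(1)^*})\wT^{(1)^*}$, gives the remaining summands.

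The main obstacle is justifying that the induction hypothesis, an identity of vectors indexed by $h\in\mathcal{H}$, may be applied under $I_{E_2}\ot(\cdot)$ to the vector $\wT^{(2)^*}h\in E_2\ot\mathcal{H}$. I intend to handle this by regarding both sides of the hypothesis as bounded module maps $\mathcal{H}\to E_2^{\ot m}\ot\mathcal{D}_{T^{(1)}}$ that agree on all of $\mathcal{H}$, hence are equal operators. These maps intertwine $\sigma$ with the left actions, since $\Delta$'s, $\wT^*$'s, $\widetilde{A}$, $C$, $D$ are all module maps. Therefore $I_{E_2}\ot(\cdot)$ of them is well defined on $E_2\ot_\sigma\mathcal{H}$, and the two tensored maps coincide. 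The remaining care is the bookkeeping of tensor-index ordering, i.e. which copies of $E_2$ are the outer ones.
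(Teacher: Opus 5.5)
Your proposal is correct and follows essentially the paper's route: induction on $m$ with \eqref{JJ8} and \eqref{JJ6} as the base case. The only difference is which end you peel off. You use $\widetilde{A}_{m+1}^*=(I_{E_2}\ot\widetilde{A}_m^*)\widetilde{A}^*$, so the new summand is $k=m$ and the old summands shift their $\wT^{(2)}$-exponent. The paper uses $\widetilde{A}_n^*=(I_{E_2^{\ot n-1}}\ot\widetilde{A}^*)\widetilde{A}_{n-1}^*$, so the new summand is $k=0$ and the old summands shift the index of $\widetilde{A}_k$. Your module-map justification for tensoring an operator identity on $\mathcal{H}$ with an identity map is what the paper uses implicitly.
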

\begin{proof}
	We shall prove by the mathematical induction. It is clear from Equation (\ref{JJ8}) that $(1)$ is satisfied for $m=1$ and from Equation (\ref{JJ8}), we get
	$$(I_{E_2^{\ot m-1}}\ot \widetilde{A}^*\Delta(T^{(1)}))\wT_{m-1}^{(2)^*}h=(I_{E_2^{\ot m}}\ot \Delta(T^{(1)}))\wT_{m}^{(2)^*}h-(I_{E_2^{\ot m-1}}\ot C^*(I_{E_1}\ot \Delta(T^{(2)}))T^{(1)^*})\wT_{m-1}^{(2)^*}h.$$ 
	Now, let us assume that $(1)$ is satisfied for $m=n-1$ for some $n\in \mathbb{N}$ that is,
	$$\widetilde{A}_{n-1}^*\Delta(T^{(1)})h=(I_{E_2^{\ot {n-1}}}\ot \Delta(T^{(1)}))\wT_{n-1}^{(2)^*}h - \sum_{k=0}^{n-2} (I_{E_2^{\ot n-2-k}}\ot (I_{E_2}\ot \widetilde{A}_k^*)C^*(I_{E_1}\ot \Delta(T^{(2)}))\wT^{(1)^*})\wT_{n-2-k}^{(2)^*}h.$$
	Then, for $m = n$ we have
	\begin{align*}
		&\widetilde{A}_n^*\Delta(T^{(1)})h=(I_{E_2^{\ot n-1}}\ot \widetilde{A}^*)\widetilde{A}_{n-1}^*\Delta(T^{(1)})h=(I_{E_2^{\ot n-1}}\ot \widetilde{A}^*)(I_{E_2^{\ot {n-1}}}\ot \Delta(T^{(1)}))\wT_{n-1}^{(2)^*}h  \\& \quad \quad - \sum_{k=0}^{n-2} (I_{E_2^{\ot n-1}}\ot \widetilde{A}^*)(I_{E_2^{\ot n-2-k}}\ot (I_{E_2}\ot \widetilde{A}_k^*)C^*(I_{E_1}\ot \Delta(T^{(2)}))\wT^{(1)^*})\wT_{n-2-k}^{(2)^*}h\\&=(I_{E_2^{\ot {n-1}}}\ot \widetilde{A}^*\Delta(T^{(1)}))\wT_{n-1}^{(2)^*}h - \sum_{k=0}^{n-2} (I_{E_2^{\ot n-2-k}}\ot (I_{E_2}\ot \widetilde{A}_{k+1}^*)C^*(I_{E_1}\ot \Delta(T^{(2)}))\wT^{(1)^*})\wT_{n-2-k}^{(2)^*}h\\&=(I_{E_2^{\ot n}}\ot \Delta(T^{(1)}))\wT_{n}^{(2)^*}h-(I_{E_2^{\ot n-1}}\ot C^*(I_{E_1}\ot \Delta(T^{(2)}))\wT^{(1)^*})\wT_{n-1}^{(2)^*}h \\& \quad \quad - \sum_{k=0}^{n-2} (I_{E_2^{\ot n-2-k}}\ot (I_{E_2}\ot \widetilde{A}_{k+1}^*)C^*(I_{E_1}\ot \Delta(T^{(2)}))\wT^{(1)^*})\wT_{n-2-k}^{(2)^*}h\\&=(I_{E_2^{\ot n}}\ot \Delta(T^{(1)}))\wT_{n}^{(2)^*}h-\sum_{k=0}^{n-1} (I_{E_2^{\ot n-1-k}}\ot (I_{E_2}\ot \widetilde{A}_{k}^*)C^*(I_{E_1}\ot \Delta(T^{(2)}))\wT^{(1)^*})\wT_{n-1-k}^{(2)^*}h.
	\end{align*} Hence $(1)$ holds for $m\in \mathbb{N}.$ Statement $(2)$ follows in a similar manner by using Equation (\ref{JJ6}). 
\end{proof}

Let $(\sigma, T)$ be a c.c.c. representation of $E$ on $\mathcal H$, we define a subspace $$H_i(\wT):=\{h\in \mathcal{H}: \|\wT_{n}(\xi_n\ot h)\|=\|\xi_n\ot h\| ~~\mbox{for}~~ n\in\mathbb{N}_0,\xi_n\in E^{\ot n}\}.$$ In particular, $H_i(\widetilde{A}^*)=\{h\in \mathcal{D}_{ T^{(1)}}: \| \widetilde{A}_m^*h\|=\|h\| ~~\mbox{for}~~ n\in\mathbb{N}_0\}$ and $H_i(D)=\{h\in \mathcal{D}_{ T^{(2)}}: \| D^{(n)}h\|=\|h\| ~~\mbox{for}~~ n\in\mathbb{N}_0\},$ where $\widetilde{A}$ and $D$ defined as above in Eqquation (\ref{Dimple1}).

We are now ready to obtain our main result on the interplay between pairs of commuting pure contractions and their corresponding BCL triples.
\begin{theorem}\label{JJ10}
	Assume $(\sigma, T^{(1)}, T^{(2)})$ to be a c.c.c. representation of $\be$ on $\mathcal H$ such that $(\mathcal{D}_{ T^{(2)}} \oplus (E_2\ot \mathcal{D}_{ T^{(1)}}),U,P)$ to be a complete BCL triple for $(\sigma, T^{(1)}, T^{(2)}).$ Then $H_i(\widetilde{A}^*)=H_i(D)=\{0\}.$
\end{theorem}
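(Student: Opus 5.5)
The plan is to imitate the scalar argument of Das--Sarkar--Sarkar: show that a vector of $H_i(\widetilde{A}^*)$ (resp. $H_i(D)$) is orthogonal to the dense set $\{\Delta(T^{(1)})h\}$ (resp. $\{\Delta(T^{(2)})h\}$), using Lemma \ref{JJ9} and purity. I will use the standing hypothesis of this section that the representation is pure. I will also use that purity of $(\sigma,T)$ forces purity of each $(\sigma,T^{(i)})$. This holds because, by (\ref{rep}), $\wT_n^{*}$ factors through $\wT_n^{(1)*}$ and $\wT_n^{(2)*}$ up to the unitaries $t_{j,i}$, and the sequences $\|\wT^{(i)*}_n h\|$ are decreasing. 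If this deduction fails in general, I will simply add purity of $T^{(1)},T^{(2)}$ as the needed hypothesis.

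\emph{Step 1 (unitarity consequences).} Fix $h_0\in H_i(\widetilde{A}^*)$ and put $y_m:=\widetilde{A}_m^*h_0$. From $\widetilde{A}\widetilde{A}^*+BB^*=I$ and $\|\widetilde{A}^*y_m\|=\|y_m\|$ (suitably amplified by $I_{E_2^{\ot m}}$), I get $(I\ot B^*)y_m=0$. Then $C\widetilde{A}^*+DB^*=0$ gives $(I\ot C\widetilde{A}^*)y_m=0$ for every $m$. Moreover, since $\widetilde{A}_m^*$ is isometric on $h_0$, we have $\widetilde{A}_m\widetilde{A}_m^*h_0=h_0$. The same applies to the intermediate factors $(I\ot \widetilde{A}_k)(I\ot\widetilde{A}_k^*)$ acting on the vectors $(I\ot\widetilde{A}^*)y_{m-1-k}$, because the norm is preserved along the whole chain.

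\emph{Step 2 (orthogonality).} For $h\in\mathcal H$, write $\langle \Delta(T^{(1)})h,h_0\rangle=\langle \widetilde{A}_m^*\Delta(T^{(1)})h,\widetilde{A}_m^*h_0\rangle$ and expand the left factor by Lemma \ref{JJ9}(1). In each summand with index $k$, I split $\widetilde{A}_m^*h_0=(I_{E_2^{\ot m-1-k}}\ot(I_{E_2}\ot\widetilde{A}_k^*)\widetilde{A}^*)y_{m-1-k}$. Moving $(I\ot\widetilde{A}_k^*)$ across by Step 1 leaves the pairing $\langle C^*(\cdots),\widetilde{A}^*y_{m-1-k}\rangle=\langle \cdots, C\widetilde{A}^*y_{m-1-k}\rangle=0$. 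Hence every summand vanishes. What remains is $|\langle\Delta(T^{(1)})h,h_0\rangle|\le\|\Delta(T^{(1)})\|\,\|\wT_m^{(2)*}h\|\,\|h_0\|$, which tends to $0$ as $m\to\infty$ by purity of $T^{(2)}$. Since $h_0\in\mathcal D_{T^{(1)}}=\overline{\mathrm{Im}\,\Delta(T^{(1)})}$, this forces $h_0=0$.

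\emph{Step 3 (the $D$ case).} This is symmetric, using Lemma \ref{JJ9}(2). For $h_0\in H_i(D)$ and $z_m=D^{(m)}h_0$, the relation $B^*B+D^*D=I$ gives $(I\ot B)z_m=0$. Then $\widetilde{A}^*B+C^*D=0$ gives $(I\ot C^*D)z_m=0$, which kills the cross terms involving $C$. Purity of $T^{(1)}$ makes the main term vanish, so $h_0\perp \Delta(T^{(2)})\mathcal H$ and hence $h_0=0$.

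I expect the main obstacle to be the bookkeeping in Step 2. One must justify carefully that the amplified maps $I_{E_2^{\ot j}}\ot\widetilde{A}_k$ can be transferred across the inner product and act as the identity on the relevant vectors; this requires checking that norm equality at level $m$ propagates to each intermediate level. One must also check that the module/tensor identifications $t_{j,i}$ do not interfere with the factorization $\widetilde{A}_m^*=(I\ot\widetilde{A}_k^*)\cdots$ used in Lemma \ref{JJ9}.
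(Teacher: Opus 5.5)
Your argument is the same as the paper's. You expand $\widetilde{A}_m^*\Delta(T^{(1)})h$ (resp. $D^{(m)}\Delta(T^{(2)})h$) by Lemma \ref{JJ9}. You kill every cross term using $B^*$-annihilation together with $C\widetilde{A}^*=-DB^*$ (resp. $B$-annihilation together with $C^*D=-\widetilde{A}^*B$). Purity then kills the main term. Your explicit justification that $I\otimes\widetilde{A}_k\widetilde{A}_k^*$ (resp. $I\otimes D^{(k)*}D^{(k)}$) fixes the relevant vectors fills in a step the paper passes over silently.

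The one thing to fix is your claim that purity of $(\sigma,T)$ forces purity of each $(\sigma,T^{(i)})$. It is false. The factorization through $t_{j,i}$ only gives $\|\widetilde{T}_n^{*}h\|\le\|\widetilde{T}_n^{(i)*}h\|$, which is the wrong direction, and monotonicity of $\|\widetilde{T}_n^{(i)*}h\|$ only gives a limit, not a zero limit.

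Concretely, take $\mathcal{B}=E_1=E_2=\mathbb{C}$, $T^{(1)}=I$, and $T^{(2)}$ the unilateral shift.
\begin{itemize}
\item Then $T=T^{(2)}$ is pure and $\mathcal{D}_{T^{(1)}}=\{0\}$.
\item Both $\mathcal{M}_U$ and $\mathcal{N}_U$ equal $\{0\}\oplus\mathcal{D}_{T^{(2)}}$, so the triple is complete with $U=I$.
\item Hence $D=I_{\mathcal{D}_{T^{(2)}}}$ and $H_i(D)=\mathcal{D}_{T^{(2)}}\neq\{0\}$.
\end{itemize}

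So the statement fails under product-purity alone. Your fallback hypothesis, purity of $T^{(1)}$ and $T^{(2)}$ individually, is required, not optional. It is also exactly what the paper's proof tacitly uses when it lets $\|\widetilde{T}_m^{(2)*}h'\|\to 0$, consistent with the section's framing in terms of pairs of commuting pure contractions. With that hypothesis stated outright, your Steps 1--3 are correct.
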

\begin{proof}
	Let us begin by observing that $U$ is a unitary operator implies that $\widetilde{A}^*=P^{\perp}U^*Q|_{ran{Q}}$ is a contraction. Moreover, $\|\widetilde{A}_m^*h\|^2=\|\widetilde{A}_{m-1}^*h\|^2-\|(I_{E_2^{\ot m-1}}\ot B^*)\widetilde{A}_{m-1}^*h\|^2$ for all $h\in \mathcal{D}_{ T^{(1)}}$ and $m\in \mathbb{N}.$ Let $h\in H_i(\widetilde{A}^*),$ then $\| \widetilde{A}_m^*h\|=\|h\|$ for all $m\in\mathbb{N}_0,$ and hence $\|(I_{E_2^{\ot m-1}}\ot B^*)\widetilde{A}_{m-1}^*h\|=0.$
	More precisely, $H_i(\widetilde{A}^*)$ is a $\widetilde{A}^*$-invariant subspace of $ker (I_{E_2^{\ot m-1}}\ot B^*).$ 
	For $h\in H_i(\widetilde{A}^*),\Delta(T^{(1)})h'\in \mathcal{D}_{ T^{(1)}}$ and $m\in\mathbb{N},$ we have
	\begin{align*}
		&\langle h,\Delta(T^{(1)})h'\rangle=	\langle \widetilde{A}_m\widetilde{A}_m^*h,\Delta(T^{(1)})h'\rangle=\langle \widetilde{A}_m^*h,\widetilde{A}_m^*\Delta(T^{(1)})h'\rangle\\&=\langle \widetilde{A}_m^*h,(I_{E_2^{\ot m}}\ot \Delta(T^{(1)}))\wT_m^{(2)^*}h'\rangle- \\& \quad \quad\sum_{k=0}^{m-1} \langle \widetilde{A}_m^*h,(I_{E_2^{\ot m-1-k}}\ot (I_{E_2}\ot \widetilde{A}_k^*)C^*(I_{E_1}\ot \Delta(T^{(2)}))\wT^{(1)^*})\wT_{m-1-k}^{(2)^*}h'\rangle \\&=\langle \widetilde{A}_m^*h,(I_{E_2^{\ot m}}\ot \Delta(T^{(1)}))\wT_m^{(2)^*}h'\rangle- \\& \quad \quad\sum_{k=0}^{m-1} \langle (I_{E_2^{\ot m-1-k}}\ot C(I_{E_2}\ot \widetilde{A}_k))\widetilde{A}_m^*h,(I_{E_2^{\ot m-1-k}}\ot(I_{E_1}\ot \Delta(T^{(2)}))\wT^{(1)^*})\wT_{m-1-k}^{(2)^*}h'\rangle \\&=\langle \widetilde{A}_m^*h,(I_{E_2^{\ot m}}\ot \Delta(T^{(1)}))\wT_m^{(2)^*}h'\rangle- \\& \quad \quad\sum_{k=0}^{m-1} \langle (I_{E_2^{\ot m-1-k}}\ot C\widetilde{A}^*)\widetilde{A}_{m-k-1}^*h,(I_{E_2^{\ot m-1-k}}\ot(I_{E_1}\ot \Delta(T^{(2)}))\wT^{(1)^*})\wT_{m-1-k}^{(2)^*}h'\rangle \\&=\langle \widetilde{A}_m^*h,(I_{E_2^{\ot m}}\ot \Delta(T^{(1)}))\wT_m^{(2)^*}h'\rangle+ \\& \quad \quad\sum_{k=0}^{m-1} \langle (I_{E_2^{\ot m-1-k}}\ot DB^*)\widetilde{A}_{m-k-1}^*h,(I_{E_2^{\ot m-1-k}}\ot(I_{E_1}\ot \Delta(T^{(2)}))\wT^{(1)^*})\wT_{m-1-k}^{(2)^*}h'\rangle \\&=\langle \widetilde{A}_m^*h,(I_{E_2^{\ot m}}\ot \Delta(T^{(1)}))\wT_m^{(2)^*}h'\rangle.
	\end{align*} This implies that $|\langle h,\Delta(T^{(1)})h'\rangle|\le \lim_{m\to \infty} \|h\|\|\wT_m^{(2)^*}h'\|=0,$ and hence $H_i(\widetilde{A}^*)\perp \mathcal{D}_{ T^{(1)}}.$ This is a contradiction to the fact that $H_i(\widetilde{A}^*)\subseteq \mathcal{D}_{ T^{(1)}},$
unless $H_i(\widetilde{A}^*)=0.$ Similarly, from statement $(2)$ of Lemma \ref{JJ9} we get $H_i(D)=\{0\}.$ This completes the proof. 
\end{proof}

\subsection*{Conflict of Interest}
We state that there is no conflict of interest and we have no personal relationships that could have appeared to influence the work reported in this paper.

\subsection*{Data Availability}
Data sharing is not applicable to this article as no datasets were generated or analyzed during the current study.

\subsection*{Acknowledgement}
 The author want to thank Harsh Trivedi and Shankar Veerabathiran for some fruitful discussions.

\end{document}